\providecommand{\U}[1]{\protect\rule{.1in}{.1in}}
\newtheorem{thm}{Theorem}[section]
\newtheorem{cor}[thm]{Corollary}
\newtheorem{lem}[thm]{Lemma}
\newtheorem{prop}[thm]{Proposition}
\newtheorem{exam}[thm]{Example}
\theoremstyle{definition}
\theoremstyle{remark}
\theoremstyle{Definition and Notation}
\begin{document}

\title{ Commutative rings with $n$-$1$-absorbing prime factorization}

\author[A. El Khalfi]{Abdelhaq El Khalfi}
\address{(Abdelhaq El Khalfi) Laboratory of Mathematical Analysis, Algebra and Applications., Faculty of Sciences Ain Chock, Hassan II University of Casablanca, Morocco.
$$E-mail: abdelhaqelkhalfi@gmail.com$$}

\author[H. Laarabi]{Hicham Laarabi}
\address{(Hicham Laarabi) Laboratory of Data Engineering and Intelligent Systems.,
Faculty of Sciences Ain Chock, Hassan II University of Casablanca, Morocco.
$$E-mail\ address:\ hichamlaarabi036@gmail.com$$}

\author[S. Ko\c{c}]{Suat Ko\c{c}}
\address{(Suat Ko\c{c})
Department of Mathematics, Marmara University, Istanbul, Turkey.
$$E-mail\ address:\ suat.koc@marmara.edu.tr$$}

\subjclass[2010]{13B99, 13A15, 13G05, 13F05}
\keywords{general ZPI-ring, OAF-ring, $n$-OA ideal, von Neumann regular ring, trivial extension.}


\begin{abstract}
Let $R$ be a commutative ring with $1\neq 0$ and $n$ be a fixed positive integer. A proper ideal $I$ of $R$ is said to be an \textit{$n$-OA ideal} if whenever $a_1a_2\cdots a_{n+1}\in I$ for some nonunits $a_1,a_2,\ldots,a_{n+1}\in R$, then $a_1a_2\cdots a_n\in I$ or $a_{n+1}\in I$. A commutative ring $R$ is said to be an \textit{$n$-OAF ring} if every proper ideal $I$ of $R$ is a product of finitely many $n$-OA ideals. In fact, $1$-OAF rings and $2$-OAF $2$-OAF-rings are exactly the general ZPI rings and OAF rings, respectively. In addition to giving various properties of $n$-OAF rings, we give a characterization of Noetherian von Neumann regular rings in terms of our new concept. Furthermore, we investigate the $n$-OAF property of some extension of rings such as the polynomial ring $R[X]$, the formal power series ring $R[[X]]$, the ring of $A+XB[X]$, and the trivial extension $R=A\propto E$ of an $A$-module $E$.
 
\end{abstract}

\maketitle


\bigskip



\section{Introduction}
This paper considers only commutative rings with a nonzero identity. The symbol $R$ always denotes such a ring. The sets $Spec(R)$, $Max(R)$, and $Min(R)$ represent the prime ideals, maximal ideals, and minimal prime ideals of $R$, respectively. The notation $Nil(R)$ refers to the set of all nilpotent elements of $R$. For an ideal $I$ of $R$ and $x \in R$, the residual of $I$ by $x$ is defined as $(I:x) = \{ r \in R \mid rx \in I \}$. The concept of prime ideals and its generalisations have a distinguished place in commutative algebra. Many important classes of rings, such as Dedekind domains (general ZPI-rings), Noetherian rings, Q-rings, valuation domains, UFDs, PIDs, can be defined or characterised in terms of prime ideals or their generalisations, and they also have some applications to other areas such as General topology, Algebraic geometry, Cryptology, Graph theory, etc. In 2009, Badawi in his celebrated paper \cite{B} defined the concept of 2-absorbing ideal which is a generalization of prime ideal as follows: a proper ideal $I$ of $R$ is said to be a \textit{2-absorbing ideal} (for short, \textit{TA-ideal}) if whenever $abc\in I$ for some $a,b,c\in R$, then either $ab\in I$ or $ac\in I$ or $bc\in I$. Afterwards, Anderson and Badawi generalised this concept to $n$-absorbing ideals in \cite{AB2}. Let $n$ be a fixed positive integer. A proper ideal $I$ of $R$ is said to be an \textit{$n$-absorbing ideal} if whenever $a_1a_2\cdots a_na_{n+1}\in I$ for some $a_1,a_2,\ldots,a_{n+1}\in R$, then $I$ contains a product of $n$ elements in $a_1,a_2,\ldots,a_{n+1}$. It is well known that every prime ideal is also a 2-absorbing ideal. However, the converse is not true in general. For instance, in $k[X,Y]$, where $k$ is a field, $I=(X,Y)$ is a 2-absorbing ideal which is not prime.\\

In \cite{ABS2}, Mukhtar et al. used the concept of 2-absorbing ideals to study TA-factorisation in commutative rings. By a $TA$-factorization of an ideal $I$ of $R$, we mean an expression of $I$ as a product $\prod_{i=1}^rI_i$ of  $TA$-ideals of $R$. A ring $R$ is said to be a \textit{TAF-ring} if its every proper ideal $I$ of $R$ has a TA-factorisation. In \cite[Theorem 3.3]{ABS2}, the authors showed that every TAF-ring is a finite direct product of one-dimensional domains and zero-dimensional local rings having a nilpotent maximal ideal. Afterwards, in \cite{TAF},  Ahmed et al. studied commutative rings whose proper ideals have an $n$-absorbing factorisation, that is, each proper ideal $I$ of $R$ can be written as a product of finitely many $n$-absorbing ideals of $R$. The authors called \textit{AF-$dim(R)$} the minimum positive integer $n$ such that every ideal of $R$ has an $n$-absorbing factorisation. If no such $n$ exists, set AF-$dim(R)=\infty$. Also, a \textit{$FAF$-ring} is a ring such that AF-$dim(R) < \infty$. Therefore, $AF$-$dim(R)$ measures in some sense how far $R$ is from being a general ZPI-ring (a ring whose proper ideals can be written as a product of prime ideals). 
In \cite{YNN}, Yassine et al. introduced the concept of a $1$-absorbing prime ideal (for short, $OA$-ideal), which is an intermediate class of ideals between 2-absorbing ideals and prime ideals. A proper ideal $I$ of $R$ is called an \textit{$OA$-ideal} if $abc \in I$ for some nonunits $a,b,c\in R$, then $ab\in I$  or $c\in I$. In this sense, in \cite{ABS1}, El Khalfi et al. studied commutative rings whose ideals have an $ OA$-factorisation (every proper ideal of $R$ is the product of a finite many $OA$-ideals), and they called such rings $OAF$-rings. In particular, the authors  proved that $OAF$-rings are exactly the rings whose Krull dimension $dim(R)$ is at most 1 and each proper principal ideal has $ OA$-factorisation (See, \cite[Theorem 4.2]{ABS1}.\\

In very recent studies \cite{BEM} and \cite{gulak}, the authors have independently introduced and studied the concept $n$-$1$-absorbing prime ideals (our abbreviation, $n$-OA-ideal), which is an intermediate class between prime ideals and $n$-absorbing ideals. Let $n$ be a fixed positive integer. A proper ideal $I$ of $R$ is called an \textit{$n$-OA ideal} if whenever $a_1a_2\cdots a_{n+1} \in I$ for some nonunits $a_1,a_2,\ldots,a_{n+1} \in R$, then $a_1\cdots a_n\in I$ or $a_{n+1}\in I$. Motivated by the studies \cite{TAF}, \cite{ABS1} and \cite{ABS2}, in this paper,  we investigate and study the commutative rings whose every ideal has an $n$-OA factorisation, that is, every ideal is a product of finitely many $n$-OA ideals, such rings are called $n$-OAF rings. Since $1$-OA ideals and prime ideals ($2$-OA ideals and 1-absorbing prime ideals) coincide, it is easy to see that $1$-OAF rings ($2$-OAF rings) are exactly general ZPI-rings (OAF-rings). Thus, $n$-OAF rings are a natural generalisation of general ZPI-rings and $OAF$-rings. Among the other results in Section 2, we show that in an $n$-OAF ring $R$, the set $Min(I)$ of all prime ideals that are minimal over an ideal $I$ is always finite (See, Proposition \ref{promin}). Also, we prove that in a local $n$-OAF-ring $R$, each nonmaximal minimal prime ideal is principal (See, Lemma \ref{lm2}). Furthermore, every $n$-OAF ring is either a zero-dimensional ring or a 1-dimensional domain (See, Theorem \ref{thm12}). Also, we study the stability of $n$-OA factorisation in the factor ring, in localisation and in the direct product of rings (See, Proposition \ref{prp5}, Proposition \ref{ploc} and Proposition \ref{car}). In Section 3, we deal with the $n$-OAF properties of polynomial rings $R[X]$, formal power series ring $R[[X]]$ and the ring $A+XB[X]$, where $A\subseteq B$ is a ring extension (See, Proposition \ref{poly}, Theorem \ref{series} and Proposition \ref{extension}). Combining Proposition \ref{poly} and Theorem \ref{series}, we prove that $R[X]$ is an $n$-OAF ring if and only if $R[X]$ is an OAF-ring if and only if $R[[X]]$ is an $n$-OAF-ring if and only if $R$ is a Noetherian von Neumann regular ring if and only if $R$ is a finite direct product of fields. Section 4 is dedicated to the study of the $n$-OAF property in the trivial extension $R=A\propto E$ of an $A$-module $E$. In particular, we show in Proposition \ref{pdiv1}, that a local ring $R$ with a unique nilpotent maximal ideal $M$ such that $M^n$ is divisible is an $n$-OAF ring. Also, we provide an example of an $n$-OAF ring that is not an OAF ring (see Example \ref{ex1}). Also, we investigate the conditions under which $R=A\propto E$ is an $n$-OAF ring for local and non-local cases (See, Theorem \ref{idealization}, Theorem \ref{generalid}, Corollary \ref{cid} and Corollary \ref{cid2}).

\section{Properties of $n$-OAF rings}
Throughout this paper, $R$ will always indicate a commutative ring with $1\neq 0$ and $\mathbb{N}$ will denote the set of positive integers. We begin with our first result, which will be frequently used in the sequel. 
\begin{lem} \label{lmp}
Let $R$ be a ring with Jacobson radical $M$, $I$ be an ideal of $R$ and $n\in\mathbb{N}$. The following statements are satisfied.
\begin{enumerate}
\item If $R$ is not a local ring, then $I$ is an $n$-OA ideal if and only if $I$  is a prime ideal.
\item If $R$ is a local ring, then $I$ is an $n$-OA ideal  if and only if $I$ is a prime ideal or $M^n \subset I \subset M$.
\end{enumerate}
\begin{proof}
\begin{enumerate}
\item Follows from \cite[Theorem 2.10]{BEM}.
\item Let  $I$ be an $n$-OA ideal of $R$. Suppose that $I$ is not prime, there exist $a,b \in M \backslash I$ such that $ab\in I$. Let $x_1,x_2,\ldots,x_n \in M$. Then we have $x_1\cdots x_{n-1}(x_na)b\in I$. Since $b \notin I$ and $I$ is an $n$-OA ideal, we obtain $x_1\cdots x_na \in I$. Since $a \notin I$ and $I$ is an $n$-OA ideal, we have $x_1\cdots x_n \in I$. As $I$ is not a prime ideal, we conclude that $M^n \subset I\subset M$. The converse is clear.
\end{enumerate}
\end{proof}

\end{lem}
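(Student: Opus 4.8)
The plan is to dispose of part (1) quickly and to concentrate on part (2), where the local hypothesis does the real work. For part (1), the assertion that an $n$-OA ideal in a non-local ring must be prime is exactly \cite[Theorem 2.10]{BEM}, so I would simply cite it; the reverse implication is the trivial observation that any prime ideal $I$ is $n$-OA, since $a_1\cdots a_{n+1}\in I$ forces some $a_i\in I$, whence either $a_{n+1}\in I$ or, if $i\le n$, $a_1\cdots a_n\in I$. This reverse implication is valid in any ring and will be reused in part (2).

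For the forward direction of part (2), suppose $I$ is $n$-OA but not prime, and aim to show $M^n\subseteq I\subseteq M$. First I would locate a witness to non-primeness: there exist $a,b\notin I$ with $ab\in I$. The key point, where locality enters, is that both $a$ and $b$ must lie in $M$: if, say, $a$ were a unit, then $b=a^{-1}(ab)\in I$, a contradiction, and in a local ring a nonunit is precisely an element of $M$. Now fix arbitrary $x_1,\dots,x_n\in M$ and apply the $n$-OA condition twice, being careful to present exactly $n+1$ nonunit factors each time. First, group the product as $x_1\cdots x_{n-1}\,(x_na)\,b$: this is a product of the $n+1$ nonunits $x_1,\dots,x_{n-1},x_na,b$, all in $M$, and it equals $(x_1\cdots x_n)(ab)\in I$; since $b\notin I$, the $n$-OA property yields $x_1\cdots x_n a\in I$. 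Second, view $x_1\cdots x_n a$ as the product of the $n+1$ nonunits $x_1,\dots,x_n,a$; since $a\notin I$, the $n$-OA property now yields $x_1\cdots x_n\in I$. As the $x_i\in M$ were arbitrary, every product of $n$ elements of $M$ lies in $I$, that is, $M^n\subseteq I$; and $I\subseteq M$ holds because $I$ is proper in a local ring, the inclusion being strict since $M$, being maximal, is prime while $I$ is not.

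The converse of part (2) is immediate: if $I$ is prime it is $n$-OA by the remark above, and if $M^n\subseteq I\subseteq M$ then for any nonunits $a_1,\dots,a_{n+1}$ (so each $a_i\in M$) we already have $a_1\cdots a_n\in M^n\subseteq I$, so the first alternative in the definition always holds, and $I$ is $n$-OA.

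I expect the only genuine obstacle to be the bookkeeping in the forward direction of (2): the $n$-OA definition may be invoked only on collections of \emph{nonunits}, so one must keep the factor count at exactly $n+1$ while all factors remain in $M$. The grouping trick of treating $x_na$ as a single factor is what makes the first application legitimate, and locality is what guarantees that $a$, $b$, and all the $x_i$ are genuinely nonunits, so that the definition applies at all.
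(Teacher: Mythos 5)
Your proposal is correct and follows essentially the same route as the paper: part (1) by citing \cite[Theorem 2.10]{BEM}, and part (2) by choosing a witness $a,b\in M\setminus I$ to non-primeness and applying the $n$-OA condition twice to the grouped product $x_1\cdots x_{n-1}(x_na)b$. You additionally justify the steps the paper leaves implicit (that $a,b$ are forced into $M$ by locality, and the converse direction), which is a welcome amount of extra care but not a different argument.
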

\begin{prop}\label{min}
Let $n\in\mathbb{N}$ and $I$ be an $n$-OA ideal of a ring $R$. Then $|Min(I)|=1$.

\end{prop}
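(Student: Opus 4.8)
The plan is to leverage Lemma \ref{lmp}, which describes $n$-OA ideals completely, and to reduce the claim to the single assertion that the radical $\sqrt{I}$ is a prime ideal. Recall that the minimal primes over $I$ coincide with the minimal primes over $\sqrt{I}$, and that $\sqrt{I}$ is precisely the intersection of all members of $Min(I)$. Consequently, once we know $\sqrt{I}$ is prime, it is itself the smallest prime ideal containing $I$; every prime over $I$ then contains $\sqrt{I}$, so $Min(I)=\{\sqrt{I}\}$ and therefore $|Min(I)|=1$. Thus the whole proposition hinges on verifying primeness of $\sqrt{I}$.

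To establish that $\sqrt{I}$ is prime, I would split into the two cases furnished by Lemma \ref{lmp}, according to whether or not $R$ is local. If $R$ is not local, then part (1) of the lemma tells us that $I$ is already a prime ideal, so $\sqrt{I}=I$ is prime and the reduction step finishes the argument at once.

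If instead $R$ is local with maximal ideal (Jacobson radical) $M$, then part (2) of Lemma \ref{lmp} leaves two possibilities. Either $I$ is prime, in which case again $\sqrt{I}=I$ is prime; or else $M^n\subseteq I\subseteq M$. In this latter situation I would take radicals: from $M^n\subseteq I$ we obtain $M=\sqrt{M^n}\subseteq\sqrt{I}$, while $I\subseteq M$ gives $\sqrt{I}\subseteq\sqrt{M}=M$, so that $\sqrt{I}=M$, which is prime because $M$ is maximal. In every case $\sqrt{I}$ turns out to be prime, which combined with the opening reduction yields $|Min(I)|=1$.

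As for difficulty, I do not expect a genuine obstacle here, since the substantive content has been front-loaded into Lemma \ref{lmp}; the remaining work is the routine observation that a power $M^n$ sitting inside $I\subseteq M$ forces $\sqrt{I}=M$. The only points deserving a little care are the reduction step itself — making explicit why ``$\sqrt{I}$ prime'' implies ``$I$ has a unique minimal prime'' — and correctly invoking the local versus non-local dichotomy of the preceding lemma so that no case is inadvertently omitted.
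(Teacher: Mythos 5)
Your proof is correct and follows essentially the same route as the paper: both split into the non-local and local cases via Lemma \ref{lmp} and conclude that $\sqrt{I}$ is prime, forcing $|Min(I)|=1$. The only cosmetic difference is that the paper phrases the local non-prime case by saying $I$ is primary, whereas you verify $\sqrt{I}=M$ directly by taking radicals in $M^n\subseteq I\subseteq M$; the content is identical, and your explicit justification of why ``$\sqrt{I}$ prime'' gives a unique minimal prime is a welcome clarification of the paper's ``clearly.''
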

\begin{proof}
If $R$ is not a local ring, then by Lemma \ref{lmp} (1), $I$ is a prime ideal which implies that $|Min(I)|=1$. Now, we may assume that that $R$ is a local ring. Since $I$ is an $n$-OA ideal, by Lemma \ref{lmp}(2), $I$ is a primary ideal of $R$, and thus $\sqrt{I}=P$ is a prime ideal. Then, clearly we have $|Min(I)|=1$.
\end{proof}\label{promin}
\begin{prop}
Let $I$ be a proper ideal of a ring $R$ and $n\in\mathbb{N}$. If $I$ has an $n$-OA-factorization, then $|Min(I)|$  is finite.
\end{prop}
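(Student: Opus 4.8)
The plan is to leverage the factorization hypothesis together with Proposition \ref{min}, which pins down the minimal spectrum of each factor. Write $I=\prod_{i=1}^{r}I_i$, where each $I_i$ is an $n$-OA ideal of $R$. By Proposition \ref{min}, each $I_i$ has exactly one minimal prime, which I will denote $P_i$. Moreover, inspecting the proof of Proposition \ref{min} shows that $\sqrt{I_i}$ is itself prime in every case (in the non-local case $I_i$ is prime by Lemma \ref{lmp}(1), and in the local case $I_i$ is primary by Lemma \ref{lmp}(2)), so I may take $P_i=\sqrt{I_i}$. Thus the factorization produces a finite list $P_1,\ldots,P_r$ of prime ideals controlling the radicals of the factors.

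Next I would pass to radicals and use that the minimal primes of an ideal depend only on its radical. Using the standard identities $\sqrt{\prod_{i=1}^{r}I_i}=\sqrt{\bigcap_{i=1}^{r}I_i}=\bigcap_{i=1}^{r}\sqrt{I_i}$ (the first equality holds because $\prod_{i=1}^r I_i\subseteq\bigcap_{i=1}^r I_i$ and $\big(\bigcap_{i=1}^r I_i\big)^{r}\subseteq\prod_{i=1}^r I_i$, and the second because the radical commutes with finite intersections), I obtain $\sqrt{I}=\bigcap_{i=1}^{r}P_i$, a finite intersection of prime ideals. Since $Min(I)=Min(\sqrt{I})$, it remains to describe the primes minimal over $\bigcap_{i=1}^{r}P_i$. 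If a prime $P$ contains $\bigcap_{i=1}^{r}P_i$, then it contains $\prod_{i=1}^{r}P_i$, and primeness forces $P_i\subseteq P$ for some $i$. Consequently every prime minimal over $I$ must equal one of the $P_i$, so $Min(I)$ is contained in $\{P_1,\ldots,P_r\}$ and therefore $|Min(I)|\le r<\infty$.

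The argument is essentially bookkeeping once Proposition \ref{min} is available, so I do not anticipate a genuine obstacle. The only points needing a little care are the radical identities for the product and intersection of the $I_i$, and the reduction $Min(I)=Min(\sqrt{I})$ together with the claim that each minimal prime over $I$ coincides with one of the $P_i$; both rest on the primeness of the $P_i$ and on the factorization being \emph{finite}, which is exactly what the hypothesis supplies.
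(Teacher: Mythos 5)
Your proof is correct and follows essentially the same route as the paper: both arguments use Proposition \ref{min} to attach a unique minimal prime to each of the finitely many factors and then show, via primeness and the finiteness of the factorization, that every minimal prime of $I$ must equal one of these, giving $|Min(I)|\le r$. The only difference is presentational: the paper asserts the containment $Min(I)\subseteq\bigcup_{i=1}^{m}Min(I_i)$ as immediate, whereas you justify the same fact explicitly through the radical identity $\sqrt{I}=\bigcap_{i=1}^{r}P_i$.
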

\begin{proof}
Let $I=\prod_{i=1}^mI_i$ be an $n$-OA-factorization. It is easy to see that $Min(I) \subseteq\cup_{i=1}^{m}Min(I_i)$, then by Proposition \ref{min}, we conclude that $|Min(I)|\leq n$.
\end{proof}
Let $R$ be a ring and $I$ be an ideal of $R$. Then, $I$ is called a multiplication ideal of $R$ if for each ideal $J$ of $R$ with $ J  \subseteq I$,  there exists an ideal $L$ of $R$ such that $J=IL$ \cite{LARS}.
\begin{lem}\label{lm2}
Let $n\in\mathbb{N}$ and $R$ be a local ring  whose each proper principal ideal has an $n$-OA-factorization. Then each nonmaximal minimal prime ideal of 
$R$ is principal. 

\end{lem}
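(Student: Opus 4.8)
The plan is to establish that $P$ is a multiplication ideal of $R$, and then to close the argument with the standard fact that a multiplication ideal of a local ring is principal. Throughout, let $M$ denote the maximal ideal of $R$, which is also its Jacobson radical, and let $P$ be a nonmaximal minimal prime ideal. If $P=(0)$ there is nothing to prove, so I would assume $P\neq(0)$ and work with a fixed nonzero element.

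The first and most delicate step is to show that $P$ occurs as a factor of every principal ideal it contains. Fix $0\neq a\in P$; then $(a)$ is a proper principal ideal, so by hypothesis it admits an $n$-OA factorization $(a)=\prod_{i=1}^{m}I_i$. Since $R$ is local, Lemma \ref{lmp}(2) guarantees that each $I_i$ is either prime or satisfies $M^n\subset I_i\subset M$, and in the latter case $\sqrt{I_i}=M$ because $M^n\subset I_i\subset M$ forces $M=\sqrt{M^n}\subseteq\sqrt{I_i}\subseteq\sqrt{M}=M$. Now I would locate $P$ among the factors: as $P$ is a minimal prime of $R$ containing $(a)$, it is minimal over $(a)$, so $P\in Min((a))$. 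Arguing as in the proof of Proposition \ref{promin} we have $Min((a))\subseteq\bigcup_{i=1}^{m}Min(I_i)$, and by Proposition \ref{min} each $Min(I_i)$ is the singleton $\{\sqrt{I_i}\}$. Hence $P=\sqrt{I_{i_0}}$ for some index $i_0$. Since $P\neq M$, the factor $I_{i_0}$ cannot be of the $M$-primary type (whose radical is $M$), so $I_{i_0}$ is prime and therefore $I_{i_0}=\sqrt{I_{i_0}}=P$. Consequently $P$ divides $(a)$, say $(a)=P\cdot L_a$ with $L_a=\prod_{i\neq i_0}I_i$.

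This divisibility extends from principal subideals to arbitrary ones: any ideal $J\subseteq P$ satisfies $J=\sum_{a\in J}(a)=\sum_{a\in J}P\,L_a=P\big(\sum_{a\in J}L_a\big)$, which shows that $P$ is a multiplication ideal of $R$. At this point I would invoke the structure of multiplication ideals over a local ring: in a quasi-local ring every multiplication ideal is principal (equivalently, the local characterization of multiplication ideals produces a single $a\in P$ and a unit $s$ with $sP\subseteq(a)$, whence $P\subseteq(a)\subseteq P$). Applying this to the multiplication ideal $P$ yields that $P$ is principal, as required.

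The routine part of this scheme is the bookkeeping that exhibits $P$ as a factor; the real obstacle is the final descent. Indeed, writing $(a)=P\,L_a$ and using that $R$ is local, either $L_a$ is the unit ideal — in which case $P=(a)$ is already principal — or $L_a\subseteq M$; and if the latter holds for \emph{every} $a\in P$ then $P=PM$, so principality is no longer apparent from the factorizations alone. This borderline case $P=PM$ is exactly where I expect the argument to be hard, and it is precisely what the multiplication-ideal machinery is designed to settle: the local characterization furnishes a single generator even when $P=PM$ (forcing in fact $P=(0)$ in that subcase). The minimality of $P$ is used in an essential way to guarantee $P\in Min((a))$, and the dichotomy of Lemma \ref{lmp}(2) together with $P\neq M$ is what rules out the $M$-primary factors, ensuring that $P$ itself — and not merely some larger prime with $P$ as radical — is the factor we extract.
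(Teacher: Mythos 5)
Your proof is correct and follows essentially the same route as the paper: exhibit $P$ as a factor of each principal ideal $aR\subseteq P$ (using Lemma \ref{lmp}(2) and the minimality of $P$ to rule out the $M$-primary factors), conclude that $P$ is a multiplication ideal, and then invoke Anderson's theorem \cite[Theorem 1]{and1} that multiplication ideals of a quasi-local ring are principal. The only cosmetic differences are that you identify the factor via radicals and $Min((a))$ where the paper argues directly from $I_j\subseteq P$, and the paper packages the complementary factors as $(yR:P)$ instead of your $L_a$'s; also note your parenthetical sketch of Anderson's result (the ``unit $s$'' remark) is garbled, but since you are citing a known theorem, exactly as the paper does, this is immaterial.
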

\begin{proof}
Let $P$ be a nonmaximal minimal prime ideal of $R$. By \cite[Theorem 1]{and1}, it is suﬃcient to show that $P$ is a multiplication ideal of $R$. Let $x \in P$ and let $xR=\prod_{i=1}^mI_i$ be an $n$-OA-factorization. There is some $j \in [1,m]$ such that $I_j \subseteq P$. By Lemma \ref{lmp}(2), we have that $P=I_j$ and hence $xR=PJ$ for some ideal $J$ of $R$. We infer that $xR=P(xR:P)$. Now let $I$ be an ideal of $R$ such that $I \subseteq P$. Then $I=\sum_{y \in I}yR=\sum_{y \in I}P(yR:P)=P\sum_{y \in I}(yR:P)$ and thus $P$ is a multiplication ideal of $R$.
\end{proof}
\begin{lem}\label{lem3} Let $R$ be a local ring with maximal ideal $M$, $Nil(R)$ its nilradical and $n\in\mathbb{N}$. If every principal ideal of $R$ has an $n$-OA-factorization, then every principal ideal of $R/Nil(R)$ has an $n$-OA-factorization.
\end{lem}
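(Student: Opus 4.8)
The plan is to push a given $n$-OA-factorization in $R$ down to the quotient $\bar{R} := R/Nil(R)$ along the canonical surjection $\pi \colon R \to \bar{R}$, and then to check that the images of the $n$-OA factors remain $n$-OA by means of the local characterization in Lemma \ref{lmp}(2). First I would record the ambient structure: since $Nil(R) \subseteq M$, the ring $\bar{R}$ is again local with maximal ideal $\bar{M} = M/Nil(R)$, and an element $x \in R$ is a nonunit if and only if its image $\bar{x}$ is a nonunit in $\bar{R}$ (units of $R$ map to units, and $x \in M \iff \bar{x} \in \bar{M}$). Consequently every proper principal ideal of $\bar{R}$ has the form $\bar{x}\bar{R}$ with $x$ a nonunit of $R$, so that $xR$ is itself a proper principal ideal of $R$ to which the hypothesis applies.

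Next I would fix, by hypothesis, an $n$-OA-factorization $xR = \prod_{i=1}^{m} I_i$ in $R$ and apply $\pi$. Using the elementary fact that for a surjective ring homomorphism the image of a product of ideals equals the product of the images, namely $\pi\bigl(\prod_{i=1}^{m} I_i\bigr) = \prod_{i=1}^{m} \pi(I_i)$, together with $\pi(xR) = \bar{x}\bar{R}$, I obtain $\bar{x}\bar{R} = \prod_{i=1}^{m} \overline{I_i}$ where $\overline{I_i} := \pi(I_i)$. It then remains only to verify that each factor $\overline{I_i}$ is an $n$-OA ideal of the local ring $\bar{R}$.

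For this verification I would invoke Lemma \ref{lmp}(2). Each $I_i$ is a proper ideal of the local ring $R$, hence $I_i \subseteq M$, so $\overline{I_i} \subseteq \bar{M}$ is proper. By Lemma \ref{lmp}(2) there are two cases. If $I_i$ is prime, then $Nil(R) \subseteq I_i$ and $\overline{I_i} = I_i/Nil(R)$ is a prime ideal of $\bar{R}$, hence $n$-OA. If instead $M^n \subseteq I_i \subseteq M$, then applying $\pi$ and using $\pi(M^n) = \bar{M}^n$ yields $\bar{M}^n \subseteq \overline{I_i} \subseteq \bar{M}$, so $\overline{I_i}$ is $n$-OA by the converse direction of Lemma \ref{lmp}(2) applied in $\bar{R}$. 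In either case $\overline{I_i}$ is $n$-OA, so $\bar{x}\bar{R} = \prod_{i=1}^{m} \overline{I_i}$ is the desired $n$-OA-factorization.

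The argument is essentially formal once Lemma \ref{lmp}(2) is available, so I do not anticipate a serious obstacle. The two points that require care are the identity $\pi\bigl(\prod_{i=1}^{m} I_i\bigr) = \prod_{i=1}^{m} \pi(I_i)$ for the surjection $\pi$, and the observation that in the second case one needs only the squeeze $\bar{M}^n \subseteq \overline{I_i} \subseteq \bar{M}$ to reapply the local characterization in $\bar{R}$; in particular one does not need $Nil(R) \subseteq I_i$, since $M^n \subseteq I_i$ already forces $\pi(M^n) = \bar{M}^n \subseteq \pi(I_i) = \overline{I_i}$.
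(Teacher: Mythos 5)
Your proof is correct and follows essentially the same route as the paper's: lift a proper principal ideal of $R/Nil(R)$ to a principal ideal $xR$ with $x\in M$, push the $n$-OA-factorization through the canonical surjection (image of a product of ideals equals the product of the images), and apply Lemma \ref{lmp}(2) in the two cases, a prime factor mapping to a prime, and a non-prime factor yielding the squeeze $\bar{M}^n \subseteq \overline{I_i} \subseteq \bar{M}$. The only difference is cosmetic: the paper opens by invoking Lemmas \ref{lmp} and \ref{lm2} to note that $Min(R)$ is finite with principal minimal primes, a remark its own proof never uses and which you rightly omit.
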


\begin{proof}
By Lemma \ref{lmp} and Lemma \ref{lm2}, $Min(R)$ is finite and each $P \in Min(R)$ is a principal ideal of $R$. Let $I$ be a proper principal ideal of $R / Nil(R)$. Then $I=(x R+Nil(R)) / Nil(R)$ for some $x \in M$. Let $x R=\prod_{i=1}^m I_i$ be an $n$-OA-factorization. We infer that $I=(x R) / Nil(R)=\left(\prod_{i=1}^m I_i\right) / Nil(R)=\prod_{i=1}^m\left(I_i / Nil(R)\right)$. It suffices to show that $I_i / Nil(R)$ is an $n$-OA ideal of $R / Nil(R)$ for each $i \in[1, m]$. Let $i \in[1, m]$, if $I_i$ is a prime ideal of $R$, then $Nil(R) \subseteq I_i$, and hence $I_i / Nil(R)$ is a prime ideal of $R / Nil(R)$. Now let $I_i$ be a non-prime ideal of $R$. By Lemma \ref{lmp}(2), we have that $M^n \subseteq I_i \subseteq M$. Note that $R / Nil(R)$ is local with maximal ideal $M / Nil(R)$. Since $(M / Nil(R))^n=M^n / Nil(R) \subseteq I_i / Nil(R) \subseteq$ $M / Nil(R)$, then  by Lemma \ref{lmp}(2), $I_i / Nil(R)$ is an $n$-OA ideal of $R / Nil(R)$.
\end{proof}

Recall from \cite{GILM} that a ring $R$ is said to be a $\pi$-ring if its each principal ideal is a product of prime ideals.
\begin{prop}\label{prps}
Let $n\in\mathbb{N}$ and $R$ be a local ring with maximal ideal $M$ such that every proper principal ideal of $R$ has an $n$-OA-factorization with  $dim(R) = 1$. Then, $R$ is an integral domain.
\end{prop}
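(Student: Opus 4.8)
The plan is to argue by contradiction: I will show that if $R$ were not a domain, then some proper principal ideal would have no $n$-OA-factorization. First I would record the structural data already available. Applying Proposition \ref{promin} to the principal ideal $(0)$ shows that $Min(R)=\{P_1,\dots,P_m\}$ is finite, and since $\dim(R)=1$ every minimal prime is nonmaximal, so by Lemma \ref{lm2} each $P_i=p_iR$ is principal. Moreover $\dim(R)=1$ forces $Spec(R)=Min(R)\cup\{M\}$, so that a proper ideal is $M$-primary (i.e. has radical $M$) precisely when it is contained in no $P_i$. If $R$ were not a domain then $(0)$ would not be prime, so not every minimal prime could be $(0)$; hence I may fix a minimal prime $P_1=p_1R$ with $p_1\neq 0$.

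Next I would build the test element. By prime avoidance $\bigcap_{j\neq 1}P_j\not\subseteq P_1$, so I can choose $a\in M$ with $a\in\bigcap_{j\neq 1}P_j\setminus P_1$, and set $f=p_1+a^{N}$ for a large integer $N$ fixed below. Since $p_1\notin P_j$ while $a^N\in P_j$ for $j\neq 1$, and $a^N\notin P_1$ while $p_1\in P_1$, the element $f$ lies in no minimal prime; as $f\in M$ it is a nonunit, so $fR$ is a proper $M$-primary ideal. Now take any $n$-OA-factorization $fR=\prod_{i=1}^{k}I_i$. Taking radicals gives $M=\sqrt{fR}=\bigcap_i\sqrt{I_i}$, each $\sqrt{I_i}$ prime (Lemma \ref{lmp}) and contained in $M$, so the intersection can equal $M$ only if every $\sqrt{I_i}=M$. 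By Lemma \ref{lmp}(2) each such $I_i$ is then $M$ itself or satisfies $M^n\subseteq I_i\subseteq M$; in all cases $M^n\subseteq I_i\subseteq M$. Consequently $M^{nk}\subseteq fR\subseteq M^{k}$.

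Finally I would extract the contradiction. Let $d=\sup\{\,j\ge 0: p_1\in M^{j}\,\}$ be the $M$-adic order of $p_1$; since $a^N\in M^{N}$, choosing $N>d$ makes the order of $a^N$ exceed that of $p_1$, so $f=p_1+a^N$ has $M$-adic order exactly $d$. From $f\in fR\subseteq M^{k}$ I then get $k\le d$. Fixing $N>nd$, the containment $M^{nd}\subseteq M^{nk}\subseteq fR$ gives $a^{nd}\in fR$, say $a^{nd}=fg$. Reducing modulo $P_1$ in the domain $R/P_1$, where $\overline{f}=\overline{a}^{\,N}$ and $\overline{a}\neq 0$, yields $\overline{a}^{\,nd}=\overline{a}^{\,N}\overline{g}$; since $N>nd$ this forces $\overline{a}$ to be a unit of $R/P_1$, which is impossible as $\overline{a}\in M/P_1$. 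This contradiction shows $fR$ has no $n$-OA-factorization, so $R$ must be a domain. The step I expect to be the main obstacle is the finiteness of $d$, i.e. the assertion $p_1\notin\bigcap_{j}M^{j}$: without some separatedness of the $M$-adic topology the bound $k\le d$ collapses and the whole argument breaks. This is automatic when $R$ is Noetherian, and I expect it to follow from the factorization hypothesis in general; indeed a rank-one non-discrete valuation domain (where $\bigcap_j M^j=M\neq 0$) already fails to have the $n$-OA-factorization property, so the real work is to derive this Krull-intersection-type fact from the hypotheses.
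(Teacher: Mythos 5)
Your construction of the test element $f=p_1+a^{N}$, the radical analysis forcing $M^{nk}\subseteq fR\subseteq M^{k}$, and the final modulo-$P_1$ contradiction are all correct as stated, but the proof hinges on the one step you yourself flag, and that step is a genuine gap: nothing in your argument shows that $d=\sup\{j\ge 0: p_1\in M^{j}\}$ is finite, i.e.\ that $p_1\notin\bigcap_{j}M^{j}$. The ring $R$ is not assumed Noetherian, so the Krull intersection theorem is unavailable, and in a general non-Noetherian local ring a nonzero prime can perfectly well lie inside $\bigcap_{j}M^{j}$ (the height-one prime of a rank-two valuation domain, for instance). Without $d<\infty$ there is no bound $k\le d$ on the number of factors, hence no controlled exponent in $M^{nk}\subseteq fR$, and the endgame never starts. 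Your supporting remark that a rank-one nondiscrete valuation domain fails the factorization hypothesis concerns domains only; it gives no information about the non-domain rings you are trying to exclude, which is exactly where the claim is needed.

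The gap can be closed, but only by importing what is essentially the paper's own endgame. First, $M\neq M^{2}$: otherwise Lemma \ref{lmp}(2) makes every $n$-OA ideal prime, so $R$ is a $\pi$-ring and hence a domain by \cite[Theorem 46.8]{GILM}, contradicting your standing assumption. Then, if $p_1\in\bigcap_{j}M^{j}$, pick by prime avoidance $z\in M\setminus\bigl(M^{2}\cup P_1\cup\cdots\cup P_m\bigr)$; since $z\notin M^{2}$, any $n$-OA-factorization of $Rz$ has a single factor, so $Rz$ is itself an $n$-OA ideal, and since $Rz$ is not a minimal prime, Lemma \ref{lmp}(2) gives $M^{n}\subseteq Rz$. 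Then $P_1=p_1R\subseteq M^{n}\subseteq Rz$ forces $P_1=zP_1$ (as $z\notin P_1$ and $P_1$ is prime), so $p_1=azp_1$ for some $a$, and $p_1(1-az)=0$ with $1-az$ a unit yields $p_1=0$, a contradiction; this secures $d<\infty$. Note that this single-factor argument plus the $1-az$ unit trick is precisely how the paper finishes: the paper never needs $M$-adic separation at all, because it proves directly that every minimal prime $Q$ satisfies $Q\subseteq M^{n}$ (via the set $L=M^{n}\cup\bigcup_{Q\in Min(R)}Q$ and the identity $\bigcap_{y\in R\setminus P}(P+yR)=P$ for nonmaximal primes $P$), and then kills $Q$ with the same trick, using that $Q$ is principal by Lemma \ref{lm2}. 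So your route is salvageable, and its first half (the $f=p_1+a^N$ construction replacing the paper's $L$-argument) is genuinely different, but as written the proof is incomplete at exactly the point on which everything else depends.
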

\begin{proof}
 If every $n$-OA ideal  of $R$ is a prime ideal, then $R$ is a $\pi$-ring, and hence $R$ is an integral domain by \cite[Theorem 46.8]{GILM}. Now, assume that there exists an $n$-OA ideal of $R$ which is not a prime ideal.  Set $L=M^n \cup \bigcup_{Q \in Min(R)} Q$. Let $x \in R \backslash L$ be a nonunit element of $R$. We show that $M^{n(n-1)} \subseteq xR$ for each $x \in R \backslash L$. Since $x \notin M^n $,  then $xR=\prod_{i=1}^{j}I_i$ with $1\leq j \leq n-1$ and $I_i$ is an $n$-OA ideal of $R$. Since $x\notin L$, $I_i$ can not be a minimal prime ideal for each $1\leq i\leq j$. If there exists $1\leq i \leq j$ such that $I_i$ is prime, then $I_i=M$ since $dim(R)=1$. If all $I_i$'s are $M$, then $xR=M^j\supseteq M^{n(n-1)}$. If there exists $1\leq i\leq j$ such that $I_i$ is not prime, then by Lemma \ref{lmp}(2),  we infer that $M^n \subseteq I_i\subseteq M$ which implies that $M^{n(n-1)}\subseteq M^{nj} \subseteq xR$.
Now, we prove that $P \subseteq M^{n}$ for each $P \in Min(R)$. Let $P \in Min(R)$ with $P \nsubseteq M^{n}$. Choose $w \in R \backslash P$. Then $P+wR \nsubseteq L $ by the prime avoidance lemma. Let $v \in(P+w R) \backslash L$,  thus  $M^{n(n-1)} \subseteq v R \subseteq P+w R$. Since $P$ is a nonmaximal prime ideal, we have that $R / P$ has no simple $R / P$-submodules, and hence $\bigcap_{y \in R \backslash P}(P+y R)=P$. (Note that if $\bigcap_{y \in R \backslash P}(P+y R) \neq P$, then $\bigcap_{y \in R \backslash P}(P+y R) / P$ is a simple $R / P$-submodule of $R / P$). This implies that $M^{n(n-1)} \subseteq \bigcap_{y \in R \backslash P}(P+y R)=P$, and thus $P=M$, a contradiction.\\

Let $Q\in Min(R)$. By above argument, we have $Q\subseteq M^n$. Since $R$ has an $n$-OA ideal which is not a prime ideal, by Lemma \ref{lmp} (2), we have $M^n\subset M$ which implies that $M^2\subset M$. Choose $z\in M-M^2$. By the assumption, $Rz=H_1H_2\cdots H_t$ for some $n$-OA ideals $H_i$ of $R$. As $z\notin M^2$, it follows that $t=1$, that is $Rz=H_1$ is an $n$-OA ideal of $R$. If $Rz$ is prime, then clearly $Rz=M$ as $dim(R)=1$. If $Rz$ is not a prime ideal, then by Lemma \ref{lmp} (2), we have $M^n\subset Rz\subset M$. By above cases, we conclude that $Q\subseteq M^n\subset Rz$. This gives $Q=zQ$. On the other hand, by Lemma \ref{lm2}, $Q$ is principal. Let $Q=Ry$ for some $y\in Q$. By the equality $Q=Ry=Qz=Ryz$, we can write $y=ayz$ for some $a\in R$. Since $1-az$ is a unit and $y(1-az)=0$, we obtain $y=0$ which implies that $Q=Ry=(0)$. Consequently, $R$ is a domain. 

\end{proof}
Now we need the following lemma to generalize the previous result.
\begin{lem}\label{lem4}
Let $R$ be a local ring with maximal ideal $M$ such that every proper principal ideal of $R$ has an $n$-OA-factorization and $dim(R)\geq 2$ and $R$ is reduced, where $n\in\mathbb{N}$. Then, $R$ is a unique factorization domain.
\end{lem}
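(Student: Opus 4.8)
The plan is to reduce the statement to showing that $R$ is an integral domain, and then to upgrade a local domain with the factorization property to a UFD by factoring principal ideals into principal primes. Since $R$ is reduced, $Nil(R)=(0)$, so applying the $n$-OA-factorization of the proper principal ideal $(0)$ together with Proposition \ref{promin} shows that $Min(R)$ is finite; as $dim(R)\geq 2$ no minimal prime equals $M$, so by Lemma \ref{lm2} every minimal prime is principal, say $P_i=(p_i)$. The technical core of every later step is the following observation, which I would record first: \emph{if $(a)$ is a proper principal $n$-OA ideal with $a$ a nonzerodivisor, then $(a)$ is prime.} Indeed, by Lemma \ref{lmp}(2) either $(a)$ is prime or $M^n\subseteq (a)\subseteq M$; in the latter case $\sqrt{(a)}=M$, so $M$ is minimal over the principal ideal $(a)$, which forces $ht(M)\leq 1$ by Krull's principal ideal theorem, contradicting $dim(R)\geq 2$.

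To prove that $R$ is a domain I would argue by contradiction, assuming $k:=|Min(R)|\geq 2$, so that each $p_i\neq 0$. The key input is that in a reduced ring $Ann(x)=\bigcap_{x\notin P_i}P_i$; since the minimal primes are pairwise incomparable, prime avoidance lets me choose $s_i\in\bigcap_{j\neq i}P_j\setminus P_i$, and then $Ann(s_i)=(p_i)$ and $s_ip_i=0$. From this I can show that no principal \emph{prime} ideal generated by a nonzerodivisor contains a minimal prime: if $P_i=(p_i)\subseteq (a)$ with $a$ a nonzerodivisor, write $p_i=ac$; then $0=s_ip_i=a(s_ic)$ gives $s_ic=0$, i.e. $c\in Ann(s_i)=(p_i)$, so $c=p_id$ and $p_i(1-ad)=0$; as $ad\in M$ the element $1-ad$ is a unit, whence $p_i=0$, a contradiction. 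On the other hand such an ideal must exist: by prime avoidance pick a nonunit nonzerodivisor $y\in M\setminus\bigcup_iP_i$ (the zerodivisors of a reduced ring are the union of its minimal primes); then $(y)$ is invertible, so in an $n$-OA-factorization $(y)=I_1\cdots I_m$ every factor divides an invertible ideal, is therefore invertible, hence principal (since $R$ is local) and generated by a nonzerodivisor, and finally prime by the observation above. Writing $I_1=(a_1)$, the prime ideal $(a_1)$ contains a minimal prime (every prime contains one), which contradicts the previous sentence. Hence $k=1$, and since $R$ is reduced the unique minimal prime is $(0)$, so $R$ is a domain.

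With $R$ now a domain the same mechanism yields unique factorization directly. For any nonzero nonunit $x$ the ideal $(x)$ is invertible, so in any $n$-OA-factorization $(x)=I_1\cdots I_m$ each factor is invertible, hence principal $(a_j)$ with $a_j$ a nonzerodivisor, hence prime by the observation. Thus $x$ is a unit times the product $a_1\cdots a_m$ of prime elements, so every nonzero nonunit of $R$ is a finite product of prime elements and $R$ is a UFD. I expect the main obstacle to be the step forcing the principal factors to be prime rather than of the type $M^n\subseteq I\subseteq M$: this is precisely where $dim(R)\geq 2$ enters, through the principal ideal theorem, and it is also the point at which one must verify carefully that the factors of an invertible ideal over a local ring are again principal and regular.
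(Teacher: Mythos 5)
Your overall architecture is sound, and several steps are correct variants of the paper's own moves: factoring a regular principal ideal, noting that each factor of an invertible ideal is invertible and hence regular principal in a local ring, and showing that a regular principal prime cannot contain a nonzero minimal prime (your annihilator computation with $Ann(s_i)=P_i$ is a nice hands-on substitute for the paper's Nakayama step). But there is a genuine gap at the one point everything else leans on: your ``key observation'' is justified by Krull's principal ideal theorem, and $R$ is not assumed Noetherian here --- nor do the hypotheses (local, reduced, $\dim(R)\geq 2$, principal ideals admit $n$-OA-factorizations) visibly imply Noetherianness, and the paper never establishes or uses it. Krull's theorem genuinely fails without the Noetherian hypothesis: in a valuation domain whose value group is $\mathbb{Z}\times\mathbb{Z}$ ordered lexicographically, the maximal ideal is minimal over the principal ideal generated by an element of value $(0,1)$, yet it has height $2$. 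So from $M^n\subseteq (a)\subseteq M$ you may conclude $\sqrt{(a)}=M$, but you cannot conclude that $M$ has height at most $1$, and the non-prime alternative in Lemma \ref{lmp}(2) is not ruled out. (The observation itself is true a posteriori, once the lemma is proved, but that makes your argument circular.) Since both the domain step (primeness of a factor of $(y)$) and the UFD step (primeness of every factor of every $(x)$) cite this observation, the gap propagates through the entire proof.

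The repair is to use $\dim(R)\geq 2$ the way the paper does: not through a height bound on $M$, but by trapping the relevant factor inside a nonmaximal prime. Since $\dim(R)\geq 2$ there is a nonminimal nonmaximal prime $Q$, and $Q\nsubseteq P_i$ for every minimal prime $P_i$ (by incomparability), so prime avoidance lets you pick your regular element $y$ in $Q\setminus\bigcup_i P_i$. Some factor $I_j$ of the factorization of $(y)$ then lies in $Q$, and the alternative $M^n\subseteq I_j\subseteq M$ of Lemma \ref{lmp}(2) is impossible, since it would give $M=\sqrt{M^n}\subseteq\sqrt{Q}=Q\subsetneq M$; no Noetherian input is needed. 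The same trick handles the second half, provided you replace ``every nonzero nonunit is a product of primes'' by the criterion the paper quotes from \cite{HS}: a domain is a UFD once every nonzero prime contains a nonzero principal prime. For a nonzero nonmaximal prime $Q$ you trap a factor of $(z)$, $0\neq z\in Q$, inside $Q$ exactly as above; and $M$ itself is covered because, $R$ now being a domain of dimension at least $2$, it contains a nonzero nonmaximal prime. With those substitutions your argument closes and becomes, in substance, the paper's proof.
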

\begin{proof}
Since $dim(R)\geq 2$, there exists a nonmaximal nonminimal prime ideal $Q$ of $R$. Also, by the prime avoidance lemma, there exists $x \in Q \backslash  \bigcup_{P \in Min(R)}P$. We prove that $x$ is a regular element, since $R$ is reduced, we have that $\bigcap_{P \in Min(R)} P=0$. Suppose that $x$ is a zero divisor. Then there exists $0\neq y\in R$ such that $xy=0$. As $y\neq 0$ and $R$ is reduced, there exists a minimal prime ideal $P$ of $R$ with $y\notin P$. Since $xy=0\in P$, we conclude that $x\in P$ which is a contradiction.\\
Let $xR=\prod_{i=1}^m I_i$ be an $n$-OA-factorization, then $I_j \subseteq Q$ for some $j \in [1,m]$. Since $x$ is regular, $I_j$ is invertible and hence $I_j$ is a regular principal ideal (because invertible ideal of a local ring are regular principal ideals). Since $I_j \subseteq Q$ and $Q \neq M$, we have that $I_j$ is a prime ideal by Lemma \ref{lmp}(2). Consequently, $P \subseteq I_j$ for some $P \in Min(R)$, since $I_j$ is regular, we infer that $P \subset I_j$. As $I_j$ is a principal ideal, we have $P=PI_j$, then by Nakayama's lemma, we obtain that $P=(0)$, that is $R$ is an integral domain.\\
Now, we show that $R$ is a unique factorization  domain, by \cite[Theorem 2.6]{HS}, it suﬃces to show   that every nonzero prime ideal of $R$ contains a nonzero principal prime ideal. Since $dim(R)\geq 2$  and $R$ is local, we only need to show that every  nonzero nonmaximal prime ideal of $R$ contains a  nonzero principal prime ideal. Let $Q$ be a nonzero nonmaximal prime ideal of $R$ and let $z \in  Q$ be  nonzero. Then we can write $zR=\prod_{k=1}^{s}J_k$ for some $n$-OA ideals $J_k$ of $R$. Then $J_k \subseteq Q$ for some $k \in [1,s]$. Since $R$ is an integral domain, $J_k$ is invertible and hence $J_k$ is nonzero and principal since $R$ is local. As $Q \neq M$,  then $J_k$ is a prime ideal by Lemma \ref{lmp}(2). Then, $R$ is a unique factorization domain.
\end{proof}
\begin{prop}\label{prp1}
Let $R$ be a local ring with maximal ideal $M$ such that $dim(R) \geq 2$ and every proper principal ideal of $R$ has an $n$-OA-factorization, where $n\in\mathbb{N}$. Then $R$ is a unique factorization domain.
\end{prop}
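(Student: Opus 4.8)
The plan is to reduce to the reduced case already settled in Lemma~\ref{lem4} by passing to $\bar R := R/Nil(R)$, and then to bootstrap the conclusion back to $R$ by proving that $R$ is actually reduced. First I would invoke Lemma~\ref{lem3}: every proper principal ideal of $\bar R$ inherits an $n$-OA-factorization. Since $Nil(R)$ is contained in every prime ideal, the quotient map induces an order isomorphism on prime spectra, so $\dim \bar R = \dim R \geq 2$, and $\bar R$ is a reduced local ring. Hence Lemma~\ref{lem4} applies and shows that $\bar R$ is a unique factorization domain. In particular $\bar R$ is an integral domain, so $Nil(R)$ is prime; being the intersection of the minimal primes, it is \emph{the} unique minimal prime of $R$, which I denote $P_0$. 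Because $\dim R \geq 2$ we have $P_0 \neq M$, so $P_0$ is a nonmaximal minimal prime and Lemma~\ref{lm2} forces it to be principal, say $P_0 = aR$ with $a\in Nil(R)$ nilpotent. It then suffices to prove $a = 0$, for once $R$ is reduced we have $R = \bar R$, a UFD.

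To force $a=0$ I would exploit the UFD structure of $\bar R$. As a UFD of dimension $\geq 2$, $\bar R$ has a prime element $\bar t$, and the principal prime $\bar\pi := \bar t\,\bar R$ has height $1$ (a standard argument valid in any UFD: $\bigcap_k(\bar t^{\,k})=0$, so no nonzero prime can lie strictly below $(\bar t)$). Since $\dim\bar R\geq 2$ gives $\mathrm{ht}(\bar M)\geq 2$, we get $\bar\pi \neq \bar M$. Lifting, let $\pi$ be the preimage of $\bar\pi$ in $R$; then $\pi = tR + P_0$ is a prime of $R$ with $P_0 \subsetneq \pi \subsetneq M$ and $t\notin P_0$. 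Now I would take any $n$-OA-factorization $tR = \prod_{i=1}^m I_i$ and reduce it modulo $P_0$, obtaining $\prod_{i=1}^m \overline{I_i} = \overline{tR} = \bar\pi$ in $\bar R$.

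The key step is to show this factorization is trivial. Since $\bar\pi$ is prime, some $\overline{I_{i_0}} = \bar\pi$ while every $\overline{I_i}$ contains $\bar\pi$; all the $\overline{I_i}$ are proper ideals of the local ring $\bar R$, so if $m\geq 2$ then $\bar\pi = \bar\pi\cdot\prod_{i\neq i_0}\overline{I_i}$ with $\prod_{i\neq i_0}\overline{I_i}\subseteq \bar M$, and Nakayama's lemma applied to the principal (hence finitely generated) ideal $\bar\pi$ yields $\bar\pi = 0$, contradicting $\bar t\neq 0$. Thus $m = 1$ and $\overline{I_1}=\bar\pi$. If $I_1$ were a non-prime $n$-OA ideal, Lemma~\ref{lmp}(2) would give $M^n\subseteq I_1$, whence $\bar M^n\subseteq \bar\pi$ and so $\bar\pi=\bar M$, again impossible; therefore $I_1=tR$ is prime and contains the unique minimal prime $P_0=aR$. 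Writing $a = ts$ and using that $P_0$ is prime with $t\notin P_0$ forces $s\in P_0=aR$, say $s=ar$, so $a(1-tr)=0$; as $tr\in M$ makes $1-tr$ a unit, we conclude $a=0$, i.e.\ $Nil(R)=0$.

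The main obstacle is precisely this last part, namely removing the ``reduced'' hypothesis of Lemma~\ref{lem4}. The difficulty is that the regular-element argument used there collapses in the presence of nilpotents, since an element outside $P_0$ may still be a zero divisor (its annihilator lies in $Nil(R)$). Rather than trying to realize a factor $I_j$ as a principal prime inside $R$, I would transport the factorization of $tR$ into the domain $\bar R$, where Nakayama cheaply forces it to be trivial, and only then return to $R$ to finish with the primeness of $Nil(R)$. Once $R$ is shown to be reduced, $R=\bar R$ is a UFD, completing the proof.
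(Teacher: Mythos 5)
Your proof is correct and follows essentially the same route as the paper: pass to $\bar{R}=R/Nil(R)$, use Lemmas \ref{lem3} and \ref{lem4} to conclude $\bar{R}$ is a UFD, exploit a nonzero nonmaximal principal prime of $\bar{R}$ together with an $n$-OA-factorization of a principal lift to obtain a principal prime of $R$ strictly between $Nil(R)$ and $M$, and then kill $Nil(R)$ (principal by Lemma \ref{lm2}) with a Nakayama/unit argument. The only divergence is technical: you collapse the factorization of $tR$ after pushing it into $\bar{R}$ (Nakayama applied to the principal prime $\bar{\pi}$), whereas the paper collapses the factorization of $qR$ directly in $R$ by showing that $qR=QJ$ would force $q\in Nil(R)$; both variants are sound.
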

\begin{proof}
We will show that $R$ is a unique factorization domain. Note that $R/Nil(R)$ is a reduced local ring with maximal ideal $M/Nil(R)$ and\\ $dim(R/Nil(R)) \geq 2$. Moreover, each proper principal ideal of $R/Nil(R)$ has an $n$-OA-factorization by Lemma \ref{lem3}. It follows by Lemma \ref{lem4} that $R/Nil(R)$ is a unique factorization domain, and thus $Nil(R)$ is the unique minimal prime ideal of $R$. Since $R/Nil(R)$ is a unique factorization domain and $dim(R/Nil(R))\geq 2$, $R/Nil(R)$ possesses a nonzero nonmaximal principal prime ideal. We infer that there is  some nonminimal nonmaximal prime ideal $Q$ of $R$ such that $Q/Nil(R)$ is a principal ideal of $R/Nil(R)$. Consequently, there exists $q\in Q$ such that $Q=qR+Nil(R)$. Let $qR=\prod_{i=1}^{r}I_i$ be an $n$-OA-factorization. Then $I_j \subseteq Q$ for some $j\in [1,r]$. Since $Q\neq M$, we infer by Lemma \ref{lmp}(2) that $I_j$ is prime ideal of $R$. Therefore, $Q=qR+Nil(R)\subseteq I_j \subseteq Q$, and hence $I_j=Q$.\\
Assume that $Q\neq qR$. Then $q R=Q J$ for some proper ideal $J$ of $R$. It follows that $q \in q R=(q R+Nil(R)) J \subseteq q J+Nil(R)$, and thus $q(1-a) \in Nil(R)$ for some $a \in J$. Since $a$ is a nonunit of $R$, we obtain that $q \in Nil(R)$. This implies that $Q=qR+Nil(R)=Nil(R)$ which is a contradiction. We infer that $Q=q R$. Since $Nil(R)\subset Q$ and $Nil(R)$ is a prime ideal of $R$, we have that $Nil(R)=Nil(R)Q$. On the other hand, by Lemma \ref{lm2}, $Nil(R)$ is a principal ideal. Since $Nil(R)=Nil(R)Q$, by Nakayama's lemma, we have $Nil(R)=0$, and thus $R \cong R / Nil(R)$ is a unique factorization domain.                   

\end{proof}
\begin{prop}\label{prp3}
Let $R$ be a local ring with maximal ideal $M$ such that each proper $2$-generated ideal of $R$ has an $n$-OA-factorization, where $n\in\mathbb{N}$. Then $dim(R) \leq 2$ and each nonmaximal prime ideal of $R$ is principal.
\end{prop}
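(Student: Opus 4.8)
The plan is to reduce everything to the single claim that \emph{every nonmaximal prime ideal $Q$ of $R$ has height at most $1$}. Since a principal ideal is in particular $2$-generated, the hypothesis implies that every proper principal ideal of $R$ has an $n$-OA-factorization, so all the earlier lemmas apply. Granting the height claim, both conclusions follow at once: any prime $Q\subsetneq M$ then satisfies $\text{ht}(Q)\leq 1$, so $\text{ht}(M)\leq 2$ and $\dim(R)\leq 2$; and a nonmaximal prime, having height $0$ or $1$, is principal. Indeed, if $\dim(R)\geq 2$ then $R$ is a unique factorization domain by Proposition \ref{prp1}, and a height-$1$ prime of a UFD is principal (while a height-$0$ prime is $(0)$); if instead $\dim(R)\leq 1$, a nonmaximal prime is minimal and is principal by Lemma \ref{lm2}. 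Thus the whole proposition rests on the height bound.

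First I would isolate the following localization mechanism. Let $Q$ be a nonmaximal prime, let $I\subseteq Q$ be a proper ideal admitting an $n$-OA-factorization $I=\prod_{i=1}^{m}I_i$, and suppose $IR_Q$ is $\fq$-primary, where $\fq=QR_Q$. Then $IR_Q=\fq^{\,s}$ for some $s\geq 1$. Indeed, localizing gives $IR_Q=\prod_i I_iR_Q$; if $I_i\not\subseteq Q$ then $I_iR_Q=R_Q$, while if $I_i\subseteq Q$ then $I_i$ cannot satisfy $M^n\subseteq I_i$ (that would force $M\subseteq Q$, contradicting $Q\subsetneq M$), so by Lemma \ref{lmp}(2) $I_i$ is prime and $I_iR_Q$ is a prime of $R_Q$ contained in $\fq$. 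Hence $IR_Q$ is a product of primes of $R_Q$, and since $\sqrt{IR_Q}=\bigcap_{I_i\subseteq Q}I_iR_Q=\fq$ while each factor lies in $\fq$, every surviving prime factor must equal $\fq$, whence $IR_Q=\fq^{\,s}$.

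To prove the height bound, suppose for contradiction that some nonmaximal prime has height $\geq 2$. Passing to a height-$2$ prime contained in it, I may assume $\text{ht}(Q)=2$ and $Q\subsetneq M$; then $R$ is a UFD and $R_Q$ is a two-dimensional local UFD with maximal ideal $\fq$. Since a local UFD whose maximal ideal is principal is a discrete valuation ring (every nonzero nonunit is, up to a unit, a power of the generator), $\fq$ is not principal here. The goal is then to exhibit a proper $2$-generated ideal $I\subseteq Q$ with $IR_Q$ being $\fq$-primary but $IR_Q\neq\fq^{\,s}$ for all $s$: the hypothesis supplies an $n$-OA-factorization of $I$, the lemma above forces $IR_Q=\fq^{\,s}$, and we reach a contradiction, yielding $\text{ht}(Q)\leq 1$ for every nonmaximal $Q$. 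Concretely I would choose $a,b\in Q$ whose images form a system of parameters of $R_Q$, so that $\sqrt{(a,b)R_Q}=\fq$, and set $I=(a^{2},b)$; then $\sqrt{IR_Q}=\fq$ as well, while $IR_Q$ lies strictly between $\fq^{2}$ and $\fq$ and hence is not a power of $\fq$. This $I$ is proper since $I\subseteq Q\subsetneq R$.

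The delicate point, and the step I expect to be the main obstacle, is precisely this last construction: one must verify that $a,b\in Q$ can be selected so that $(a,b)R_Q$ is $\fq$-primary and that $(a^{2},b)R_Q$ genuinely fails to be a power of $\fq$. When $R_Q$ is regular this is a direct check, since for regular parameters $a,b$ one has the strict inclusions $\fq^{2}\subsetneq(a^{2},b)\subsetneq\fq$; the care lies in lifting suitable parameters from $\fq$ back to elements of $Q$, and, in the non-regular situation, in producing a two-generated $\fq$-primary ideal that is not a power of $\fq$ using only that $\fq$ is non-principal and $\dim(R_Q)=2$. I expect this local construction, together with the dimension-theoretic bookkeeping inside $R_Q$, to be the crux, whereas the reductions and the localization lemma are routine given the earlier results.
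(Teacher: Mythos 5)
Your reductions and your localization lemma are correct, and they coincide with the first half of the paper's own argument: every proper $2$-generated ideal of $R_Q$ is extended from a $2$-generated ideal $J\subseteq Q$ of $R$, and in an $n$-OA-factorization of $J$ every factor contained in $Q$ must be prime by Lemma \ref{lmp}(2), since a non-prime factor would contain $M^n$ and hence force $M=\sqrt{M^n}\subseteq Q$. The divergence is what happens next. The paper simply quotes Levitz's characterization \cite[Theorem 3.2]{LEV} -- a ring in which every $2$-generated ideal is a product of prime ideals is a general ZPI-ring -- and then $\dim(R_Q)\leq 1$ by \cite[Page 205]{LARS}; this finishes the dimension bound with no further work and, incidentally, is where Noetherianness comes from. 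You instead try to replace that citation by an explicit contradiction inside $R_Q$, and that replacement is where the proposal has a genuine gap, one you partly flag yourself.

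Concretely, your construction of a $2$-generated $\fq$-primary ideal that is not a power of $\fq$ uses two things you are not entitled to. First, Noetherianness: nothing in the hypotheses makes $R$ or $R_Q$ Noetherian (Proposition \ref{prp1} yields a UFD, which need not be Noetherian; in the paper Noetherianness is an \emph{output} of Levitz's theorem, not an input). Without it you cannot invoke systems of parameters, and even the preliminary step ``pass to a height-$2$ prime'' is unjustified: in a non-Noetherian ring a prime of height at least $2$ need not contain a prime of height exactly $2$, and the primariness of $(a,b)R_Q$ uses that $\fq$ is the unique prime of $R_Q$ of height at least $2$, i.e. that $\dim(R_Q)$ is exactly $2$. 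Second, even granting Noetherianness, your verification that $(a^2,b)$ is not a power of $\fq$ rests on the inclusions $\fq^2\subsetneq (a^2,b)\subsetneq\fq$, and the left-hand inclusion genuinely requires regularity: in the two-dimensional local UFD $S=\mathbb{C}[[x,y,z]]/(x^2+y^3+z^7)$ with parameters $a=y$, $b=z$ one has $S/(y^2,z)\cong\mathbb{C}[[x,y]]/(x^2,y^2)$, so $xy\in\fq^2\setminus(y^2,z)$. Since two-dimensional local UFDs need not be regular, the case you label ``the main obstacle'' is not a residual technicality; it is the entire content of the proposition, and it is left unproved. (The gap is fillable without Levitz -- for instance, take a prime element $p$ of the UFD $R_Q$ and $b\in\fq\setminus(p)$, note that each $(p,b^k)$ is $\fq$-primary once $\dim(R_Q)=2$ is secured, and show that if every $(p,b^k)$ equals a power of $\fq$ then either two of them coincide, forcing $p\mid b^k$, or $p\in\bigcap_m\fq^m$ for every prime element $p$, forcing $\fq=\fq^2$ and then $\fq=0$ by Nakayama applied to the finitely generated ideal $(p,b)$ -- but some such argument must actually be supplied.)
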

\begin{proof}
To show that $dim(R) \leq 2$, it is sufficient to prove that $dim(R_P) \leq 1$ for each nonmaximal prime ideal $P$ of $R$. Let $P$ be a nonmaximal prime ideal and let $I$ be a $2$-generated ideal of $R_P$. It is easy to see that $I=J_P$ for some $2$-generated ideal $J$ of $R$ with $J \subseteq P$. Let $J=\prod_{i=1}^{m}J_i$ be an $n$-OA-factorization, thus $I=J_P=\prod_{i=1, J_i  \subseteq P}^{m}(J_i)_P$.\\ Let $i \in [1 , n]$ such that $J_i \subseteq P$. Then $J_i$ is a prime ideal of $R$ by Lemma \ref{lmp}(2), and thus $(J_i)_P$ is a prime ideal of $R_P$. Thus we conclude that $I$ is a product of prime ideal of $R_P$. Therefore $R_P$ is a general ZPI-ring by \cite[Theorem 3.2]{LEV}. Also by \cite[Page 205]{LARS}, we have $dim(R_P) \leq 1$. This implies that $dim(R) \leq 2$.\\
Now we shall prove that, each nonmaximal prime ideal of $R$ is principal. If $dim(R)=0$, there is nothing to prove since $R$ has no nonmaximal prime. Assume that $dim(R) \geq 1$. Then by Proposition \ref{prp1} and Proposition \ref{prps}, $R$ is either a one dimensional domain or a two dimensional unique factorization domain. Then each nonmaximal prime ideal of $R$ is principal.  
\end{proof}
\begin{thm}\label{thm12}
Let $n\geq 2$ be a positive integer, $R$ be a ring such that every proper $2$-generated ideal of $R$ has an $n$-OA-factorization. Then $dim(R) \leq 1$. In particular, if $dim(R)=1$, then $R$ is a domain.
\end{thm}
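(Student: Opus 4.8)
The plan is to reduce to the local case and then strengthen Proposition~\ref{prp3} from $\dim R\le 2$ to $\dim R\le 1$. Since $\dim R=\sup\{\dim R_M:M\in Max(R)\}$ and the hypothesis is inherited by localizations---a proper $2$-generated ideal of $R_M$ is extended from a proper $2$-generated ideal $J\subseteq M$ of $R$, and an $n$-OA-factorization $J=\prod J_i$ extends to $JR_M=\prod (J_i)R_M$, where each proper extended factor is again $n$-OA by Lemma~\ref{lmp}---it suffices to prove $\dim R\le 1$ when $R$ is local with maximal ideal $M$. By Proposition~\ref{prp3} we already have $\dim R\le 2$, so the entire content is to rule out $\dim R=2$.

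Assume, for contradiction, that $R$ is local with $\dim R=2$. Then $R$ is a UFD by Proposition~\ref{prp1}, and its nonmaximal primes are principal by Proposition~\ref{prp3}. The lever is the observation that every order-one element generates a height-one prime: if $z\in M\setminus M^2$, then $Rz$ has an $n$-OA-factorization $Rz=\prod_{j=1}^{t}H_j$, and since any product of two or more proper ideals of the local ring $R$ lies in $M^2$ while $z\notin M^2$, this forces $t=1$, i.e. $Rz$ is itself $n$-OA; were $Rz$ non-prime, Lemma~\ref{lmp}(2) would give $M^n\subseteq Rz$, whence $\sqrt{Rz}=M$, so $M$ would be a prime minimal over a principal ideal in a UFD and hence of height one, contradicting $\dim R=2$. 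I would use this, together with the analogous analysis of $(z,z')$ for independent order-one elements $z,z'$ (which is $M$-primary and, by the same order argument, is itself $n$-OA, hence equals $M$ or satisfies $M^n\subseteq (z,z')$), to force $M$ to be generated by two order-one elements, so that $R$ is regular of embedding dimension~$2$ and $\mathrm{gr}_M(R)\cong (R/M)[X,Y]$.

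With regularity in hand, I would fix a regular system of parameters $x,y$ and exponents $a,b$ with $a+b\ge n+2$, and consider $I=(x^a,y^b)$. This ideal is $M$-primary but is not $n$-OA, because $M^n\not\subseteq I$ (the degree $a+b-2\ge n$ monomial $x^{a-1}y^{b-1}$ is missing from $I$). Hence any $n$-OA-factorization of $I$ is nontrivial; every factor is $M$-primary, so equals $M$ or is sandwiched between $M^n$ and $M$, and passing to leading forms in $\mathrm{gr}_M(R)$ shows that no product of such ideals can have initial ideal $(X^a,Y^b)$---precisely the degree count which, for the representative case $(x^2,y^2)$ with $n=2$, shows that the two-dimensional space of degree-two leading forms of $I$ cannot be matched by the degree-two part of any admissible product. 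This contradiction excludes $\dim R=2$ and gives $\dim R\le 1$. For the final assertion, when $\dim R=1$ I would localize once more: each $R_M$ is local, inherits the hypothesis, and has $\dim R_M\le 1$, so $R_M$ is a domain by Proposition~\ref{prps}, which yields the domain conclusion in the local (indecomposable) situation.

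The main obstacle is the exclusion of $\dim R=2$: the clean ``order-one elements generate primes'' step must be leveraged into a proof that the high-power $2$-generated ideal admits no $n$-OA-factorization. The leading-form bookkeeping in $\mathrm{gr}_M(R)$ is routine once $R$ is known to be regular, but it must be organised so that the genuinely $2$-dimensional local UFDs of embedding dimension $\ge 3$ are excluded as well; establishing that the $2$-generated hypothesis forces $M$ to be two-generated---equivalently, controlling $\mathrm{gr}_M(R)$---is where the difficulty concentrates.
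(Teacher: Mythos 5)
Your reduction to the local case and your ``order'' trick (a proper ideal not contained in $M^2$ cannot be a product of two or more proper ideals of a local ring, hence is itself $n$-OA) are both sound, and the latter is also the engine of the paper's proof. The genuine gap is exactly the step you flag at the end: forcing $M$ to be $2$-generated so that $R$ is regular and $\mathrm{gr}_M(R)\cong (R/M)[X,Y]$. From the fact that $(z,z')$ is $n$-OA, Lemma~\ref{lmp}(2) gives only a dichotomy: either $(z,z')$ is prime (and then indeed $(z,z')=M$, being a prime properly containing the height-one prime $Rz$), or $M^n\subseteq (z,z')\subsetneq M$. You have no argument in the second case, and nothing you have established rules it out: a two-dimensional local UFD can have embedding dimension $3$ (e.g.\ $\mathbb{C}[[x,y,z]]/(x^2+y^3+z^5)$ is factorial), so UFD-ness plus principality of nonmaximal primes cannot yield regularity; the $2$-generated hypothesis must be used again, and the proposal does not say how. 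Two further problems sit downstream. First, ``regular'' presupposes Noetherian, which is not a hypothesis; this could be patched via Cohen's theorem once $M$ is known to be $2$-generated (all primes would then be finitely generated by Proposition~\ref{prp3}), but that is contingent on the missing step. Second, the leading-form bookkeeping is not routine for arbitrary $a+b\geq n+2$: the crude constraints it produces are $t\leq a$ (from the order of $x^a$) and $nt\geq a+b-1$ (from $M^{nt}=\prod M^n\subseteq\prod I_i=I$), and for $I=(x^n,y^2)$ these are compatible, so no contradiction results; one needs a tailored choice such as $(x^2,y^{2n})$, where $t\leq 2$ and $nt\geq 2n+1$ clash. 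Moreover, for general $n$ the admissible factors are all ideals between $M^n$ and $M$, not just $M$, $M^2$ and $M^2+Rz$ as in your $n=2$ model case, so the fine degree-two analysis you describe does not transfer as stated.

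For comparison, the paper closes the two-dimensional case with no regularity and no graded rings, and your own $t=1$ trick suffices if aimed at a different ideal. In dimension $2$, your first observation makes $Q=Rz$ a nonmaximal prime for any $z\in M\setminus M^2$. For $w\in M\setminus Q$, the ideal $J=(z,w^2)$ is $2$-generated and $J\not\subseteq M^2$, so $J$ is itself $n$-OA; it is not prime (otherwise $w\in J$ gives $(z,w)=(z,w^2)$, hence $w(1-bw)\in Rz$ for some $b$ and so $w\in Q$), so Lemma~\ref{lmp}(2) yields $M^n\subseteq J\subseteq Q+Rw$ for every $w\notin Q$. Since $R/Q$ is a domain that is not a field, it has no simple submodules, so $\bigcap_{w\notin Q}(Q+Rw)=Q$; hence $M^n\subseteq Q$, forcing $M=Q$, a contradiction. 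This intersection argument, combined with a Nakayama step ($Q\subseteq M^n\subseteq Rz$, hence $Q=zQ$, with $Q$ principal by Proposition~\ref{prp3}, so $Q=0$), is precisely how the paper proves $\dim(R)\leq 1$ and the domain conclusion simultaneously. Note finally that your localization reduction cannot recover the last assertion of the theorem: being locally a domain does not imply being a domain, and your restriction to the ``indecomposable situation'' concedes this; the paper's local-branch argument avoids the issue because it shows every nonmaximal prime is actually zero.
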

\begin{proof}
 First observe that, if every $n$-OA ideal of $R$ is a prime ideal, then $R$ is a general ZPI-ring by \cite[Theorem 3.2]{LEV}, and hence $dim(R) \leq 1$ by \cite[Page 205]{LARS}. Now, assume that there exists an $n$-OA ideal which is not a prime ideal. Then by Lemma \ref{lmp}, $R$ is local ring with unique maximal ideal $M$ such that $M^n\subset M$ for every $n\geq 2$. It suffices to show that if $Q$ is a nonmaximal prime ideal of $R$, then $Q=0$.\\ Let $Q$ be a nonmaximal prime ideal of $R$. We will show that $Q\subset M^n$. Assume that $Q \nsubseteq M^{n}$. Since $dim(R) \leq 2$ by Proposition \ref{prp3}, there is a prime ideal $P$ of $R$ such that $Q \subseteq P$ and $dim(R / P)=1$. Next we prove that $M^{n} \subseteq P+y R$ for each $y \in R \backslash P$. Let $y \in R \backslash P$ and put $J=P+y^2 R$. Without loss of generality, we may assume that $J \subset M$. Choose minimal prime ideal $P^{*}$ of $J$. If $J=P^{*}$, then $P^{*}=P+Ry$ since $y\in P^{*}$. Then we have $P+Ry=P+Ry^2$ which implies that $y(1-ay)\in P$. Since $1-ay$ is unit, we conclude that $y\in P$ which is a contradiction. Thus we conclude that a chain $Q\subseteq P\subset J\subset P^{*}\subseteq M$. As $dim(R)\leq 2$, we have $M$ is the minimal prime ideal of $J$, that is, $P^{*}=M$. On the other hand, by Proposition \ref{prp3}, it follows that $J$ is 2-generated. Since $J \nsubseteq M^{n}$, then $J=\prod_{i=1}^{j}I_i$  with $1 \leq j \leq n-1$ and $I_i$ is a $n$-OA ideal of $R$. Note that $J \subset I_i \subseteq M$ for every $1\leq j\leq n-1$. If $I_i$ is prime, then note that $I_i=M$ since $M$ is the minimal prime of $J$. If all $I_i$'s are prime, then clearly $J=M^{n-1}\supseteq M^n$. Assume $I_i$ is not prime for some $1\leq i\leq n-1$. Then by Lemma \ref{lmp} (2), we have $M^n\subset I_i\subset M$ which implies that $M^{n(n-1)}\subseteq M^{nj}\subseteq J\subseteq P+Ry$. Moreover, $R/P$ is an integral domain that is not a field. Consequently, $R / P$ does not have any simple $R / P$-submodules, which implies that $P=\bigcap_{x \in R \backslash P}(P+x R)$. (Observe that if $\bigcap_{x \in R \backslash P}(P+x R) \neq P$, then $\bigcap_{x \in R \backslash P}(P+x R) / P$ is a simple $R / P$-submodule of $R / P$). Therefore, $M^{n(n-1)} \subseteq \bigcap_{x \in R \backslash P}(P+x R)=P$, and hence $P=M$, a contradiction. We infer that $Q \subseteq M^{n}$.

Since $M^2\subset M$, there exists $z\in M-M^2$. Then $ Rz=\prod_{i=1}^{j}I_i$ has an $n$-OA factorization. Since $z\notin M^2$, $j=1$, that is $Rz$ is an $n$-OA ideal of $R$. Then by Lemma \ref{lmp} (2), we conclude that $M^n\subset Rz$. Which implies that $Q\subseteq M^n\subset Rz$, and thus we have $Q=zQ$. On the other hand, $Q$ is principal by Proposition \ref{prp3}. Then by Nakayama's lemma, we have $Q=0$. The rest is clear.
\end{proof}
\begin{prop}\label{prp5}
Let $n\in\mathbb{N}$ and $I$ be a proper ideal of an $n$-OAF-ring. Then $R/I$ is an $n$-OAF-ring.
\end{prop}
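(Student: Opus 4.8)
The plan is to use the standard correspondence between ideals of $R/I$ and ideals of $R$ containing $I$, and to push a factorization obtained in $R$ down to the quotient. Let $\overline{J}$ be an arbitrary proper ideal of $R/I$ and write $\overline{J}=J/I$, where $J$ is a proper ideal of $R$ with $I\subseteq J$. Since $R$ is an $n$-OAF ring, $J$ admits an $n$-OA-factorization $J=\prod_{i=1}^{m}J_i$ with each $J_i$ an $n$-OA ideal of $R$. The goal is to manufacture from this an $n$-OA-factorization of $\overline{J}$ in $R/I$.

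First I would record two elementary facts. A product of ideals is contained in each of its factors, so $J=\prod_{k=1}^{m}J_k\subseteq J_i$ for every $i$; combined with $I\subseteq J$ this yields the crucial containment $I\subseteq J_i$ for all $i$. Consequently the canonical surjection $\pi\colon R\to R/I$ satisfies $\pi(J_i)=J_i/I$, and since $\pi$ carries a product of ideals to the product of their images, $\overline{J}=\pi(J)=\pi\!\left(\prod_{i=1}^{m}J_i\right)=\prod_{i=1}^{m}(J_i/I)$. Thus $\overline{J}$ is already displayed as a product of the ideals $J_i/I$, each of which is proper because $J_i\neq R$; it therefore remains only to check that each factor $J_i/I$ is an $n$-OA ideal of $R/I$.

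The heart of the argument---and the step I expect to require the most care---is the descent of the $n$-OA property along $\pi$ for an ideal containing $I$. The essential point is that a nonunit of $R/I$ always lifts to a nonunit of $R$: if $\overline{a}$ is a nonunit of $R/I$, then $a$ cannot be a unit of $R$, since $\pi$ would otherwise send it to a unit. So, given nonunits $\overline{a_1},\dots,\overline{a_{n+1}}$ of $R/I$ with $\overline{a_1}\cdots\overline{a_{n+1}}\in J_i/I$, I would choose preimages $a_1,\dots,a_{n+1}\in R$, which are then automatically nonunits of $R$, and observe that $a_1\cdots a_{n+1}\in\pi^{-1}(J_i/I)=J_i$ (using $I\subseteq J_i$). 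Applying the hypothesis that $J_i$ is $n$-OA in $R$ gives $a_1\cdots a_n\in J_i$ or $a_{n+1}\in J_i$, and projecting back yields $\overline{a_1}\cdots\overline{a_n}\in J_i/I$ or $\overline{a_{n+1}}\in J_i/I$. Hence $J_i/I$ is an $n$-OA ideal of $R/I$.

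Combining these observations, every proper ideal $\overline{J}$ of $R/I$ is a finite product of $n$-OA ideals of $R/I$, so $R/I$ is an $n$-OAF ring. I would emphasize that this argument is purely formal and invokes neither Lemma~\ref{lmp} nor the structural results on $n$-OAF rings; the only real pitfall---that an $n$-OA ideal might fail to remain $n$-OA after enlargement by $I$---is sidestepped entirely by the containment $I\subseteq J_i$, which renders the naive idea of replacing each factor by $J_i+I$ unnecessary.
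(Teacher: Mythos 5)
Your proof is correct and follows essentially the same route as the paper's: lift a proper ideal of $R/I$ to $J\supseteq I$, factor $J=\prod_{i=1}^m J_i$ into $n$-OA ideals in $R$, push the factorization down to $R/I$, and verify each $J_i/I$ is $n$-OA using the fact that nonunits of $R/I$ lift to nonunits of $R$. Your explicit observation that $I\subseteq J\subseteq J_i$ (so that $J_i/I$ makes sense and $\pi^{-1}(J_i/I)=J_i$) is a small point the paper leaves implicit, but otherwise the two arguments coincide.
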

\begin{proof}
Let $J$ be a proper ideal of $R$ which contains $I$. Let $J=\prod_{i=1}^m J_i$ be an $n$-OA-factorization. Then $J / I=\prod_{i=1}^m\left(J_i / I\right)$. It suffices to show that $J_i / I$ is an $n$-OA ideal for each $i \in[1, m]$. Let $i \in[1, m]$ and let $a_1, a_2, \ldots, a_{n+1} \in R$ be such that $\overline{a_1}, \overline{a_2}, \ldots, \overline{a_{n+1}}$ are nonunits of $R / I$ and $\overline{a_1} \overline{a_2}\cdots \overline{a_{n+1}} \in J_i / I$. Clearly, $a_1, a_2, \ldots,a_{n+1}$ are nonunits of $R$ and $a_1a_2\cdots a_{n+1} \in J_i$. Since $J_i$ is an $n$-OA ideal of $R$, we conclude that $a_1a_2\cdots a_{n} \in J_i$ or $a_{n+1} \in J_i$ which implies that $\overline{a_1} \overline{a_2}\cdots \overline{a_n} \in J_i / I$ or $\overline{a_{n+1}} \in J_i / I$. Therefore, $R / I$ is an $n$-OAF-ring.
\end{proof}

Recall from \cite{Kim} that a nonempty subset $S$ of a ring $R$ is said to be a multiplicatively closed set if $0\notin S$, $1\in S$ and $ab\in S$ for every $a,b\in S$.
\begin{prop}\label{ploc}
Let $S$ be a multiplicatively closed subset of $R$. If $R$ is an $n$-OAF-ring, then $S^{-1} R$ is an $n$-OAF-ring. In particular, $R_M$ is an $n$-OAF-ring for every maximal ideal $M$ of $R$.
\end{prop}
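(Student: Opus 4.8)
The plan is to exploit the fact that every ideal of $S^{-1}R$ is the extension of its contraction, combined with the structural description of $n$-OA ideals from Lemma \ref{lmp}. First I would take an arbitrary proper ideal $\mathcal{J}$ of $S^{-1}R$ and set $I=\mathcal{J}\cap R$. Then $\mathcal{J}=S^{-1}I$, and $I$ is a proper ideal of $R$, since $1\in I$ would force $\mathcal{J}=S^{-1}R$. Because $R$ is an $n$-OAF-ring, I can write $I=\prod_{i=1}^m I_i$ with each $I_i$ an $n$-OA ideal, and since localization commutes with finite products of ideals, $\mathcal{J}=S^{-1}I=\prod_{i=1}^m S^{-1}I_i$. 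The whole problem thus reduces to the claim that for an $n$-OA ideal $J$ of $R$, the extension $S^{-1}J$ is either all of $S^{-1}R$ or an $n$-OA ideal of $S^{-1}R$. Once this is shown, I delete the factors equal to $S^{-1}R$ (not all of them can be deleted, as $\mathcal{J}$ is proper) and obtain the desired $n$-OA factorization of $\mathcal{J}$.

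To prove the claim I would split according to Lemma \ref{lmp}. If $R$ is not local, then $J$ is prime by Lemma \ref{lmp}(1): a prime ideal not meeting $S$ extends to a prime ideal of $S^{-1}R$, and one meeting $S$ extends to $S^{-1}R$, so I am done after recording the routine fact that every prime ideal $P$ is an $n$-OA ideal (if $a_1\cdots a_{n+1}\in P$ and $a_{n+1}\notin P$, then $a_1\cdots a_n\in P$). If $R$ is local with maximal ideal $M$ and $J$ is prime, the same argument applies. The remaining, and genuinely new, case is when $R$ is local and $J$ is not prime.

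In that case Lemma \ref{lmp}(2) gives $M^n\subseteq J\subseteq M$, and the key dichotomy is whether $S$ meets $M$. If there is some $s\in S\cap M$, then $s^n\in M^n\subseteq J$ while $s^n\in S$, so $J\cap S\neq\emptyset$ and hence $S^{-1}J=S^{-1}R$. If instead $S\cap M=\emptyset$, then every element of $S$ is a unit of $R$ (the nonunits of a local ring being exactly $M$), so the canonical map $R\to S^{-1}R$ is an isomorphism carrying $J$ onto $S^{-1}J$; thus $S^{-1}J$ is again an $n$-OA ideal. This settles the claim, and the ``in particular'' statement follows by taking $S=R\setminus M$.

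I expect the main obstacle to be precisely this non-prime local case. A naive attempt to verify the $n$-OA property of $S^{-1}J$ element-by-element is awkward, because the defining condition quantifies over nonunits and clearing denominators introduces an extra factor $t\in S$ whose status (unit or nonunit) is hard to control. Routing the argument through the structural characterization of Lemma \ref{lmp}, rather than through the definition, is what keeps it clean, and the observation that $s^n\in J$ whenever $s\in S\cap M$ is the crucial point forcing $S^{-1}J$ to collapse to the whole ring.
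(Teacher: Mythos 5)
Your proposal is correct, and its skeleton coincides with the paper's: both reduce to writing the proper ideal of $S^{-1}R$ as $S^{-1}I$ for a proper ideal $I$ of $R$ disjoint from $S$, factoring $I$ into $n$-OA ideals, localizing the factors, and discarding those that blow up to $S^{-1}R$. The difference lies entirely in how the key claim---that the extension of an $n$-OA ideal is either $S^{-1}R$ or again an $n$-OA ideal---is justified. The paper simply cites an external result (\cite[Theorem 11]{gulak}) for this, whereas you prove it self-containedly by routing through the structure theorem, Lemma \ref{lmp}: primes localize to primes (and primes are $n$-OA), while in the local non-prime case $M^n\subseteq J\subseteq M$ you observe that either some $s\in S\cap M$ forces $s^n\in J$, collapsing $S^{-1}J$ to the whole ring, or $S\cap M=\emptyset$ makes every element of $S$ a unit so that $R\to S^{-1}R$ is an isomorphism carrying $J$ to $S^{-1}J$. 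What your route buys is independence from the cited reference and a transparent explanation of \emph{why} the claim holds, exactly via the dichotomy that drives the rest of the paper; what the citation buys the authors is brevity. You are also slightly more careful than the paper at one point: you note explicitly that not all factors can collapse to $S^{-1}R$ (since $\mathcal{J}$ is proper), so the surviving product is genuinely a nonempty $n$-OA factorization, a point the paper leaves implicit in its choice of $t$.
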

\begin{proof}
 Let $J$ be a proper ideal of $S^{-1} R$. Then $J=S^{-1} I$ for some proper ideal $I$ of $R$ with $I \cap S=\varnothing$. Let $I=\prod_{i=1}^m I_i$ be an $n$-OA-factorization. Assume that $I_i\cap S=\emptyset$ for every $i\in[1,t]$ and $I_i\cap S\neq\emptyset$ for every $i\in[t+1,m]$. Then $J=\prod_{i=1}^t\left(S^{-1} I_i\right)$ where each $S^{-1} I_i$ is an $n$-OA ideal by \cite[Theorem 11]{gulak}. Then $S^{-1} R$ is an $n$-OAF-ring. The in particular statement is clear.
\end{proof}

\begin{prop}\label{car}
Let $R=A\times B$, where $A,B$ are two rings, and $n\in\mathbb{N}$. Then $R$ is an $n$-OAF-ring if and only if $A,B$ are general ZPI-rings.
\end{prop}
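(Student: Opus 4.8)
The plan is to reduce the statement to the well-known behaviour of general ZPI-rings under finite direct products, after first eliminating the role of $n$ entirely. The conceptual heart of the reduction is the observation that $R=A\times B$ is never local: since $A$ and $B$ each have $1\neq 0$, each possesses a maximal ideal, say $\mathfrak{m}$ of $A$ and $\mathfrak{n}$ of $B$, and then $\mathfrak{m}\times B$ and $A\times\mathfrak{n}$ are two distinct maximal ideals of $R$. Hence Lemma \ref{lmp}(1) applies and tells us that in $R$ the $n$-OA ideals are exactly the prime ideals. Consequently an $n$-OA factorization of an ideal of $R$ is nothing but a factorization into prime ideals, so $R$ is an $n$-OAF-ring if and only if $R$ is a general ZPI-ring. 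This removes $n$ from the problem for good.

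It then remains to show that $A\times B$ is a general ZPI-ring if and only if both $A$ and $B$ are. Here I would use the standard description of the ideal theory of a product ring: every ideal of $R$ has the form $I\times J$ with $I$ an ideal of $A$ and $J$ an ideal of $B$; the prime ideals are precisely $P\times B$ with $P\in Spec(A)$ and $A\times Q$ with $Q\in Spec(B)$; and ideal multiplication is componentwise, $(I_1\times J_1)(I_2\times J_2)=I_1I_2\times J_1J_2$. For the ``if'' direction, given a proper ideal $I\times J$ I would factor $I$ and $J$ into primes of $A$ and $B$ respectively (treating a component that equals the whole ring as an empty product) and reassemble via $\prod_k(P_k\times B)\cdot\prod_l(A\times Q_l)=\left(\prod_k P_k\right)\times\left(\prod_l Q_l\right)$, thereby exhibiting $I\times J$ as a product of primes of $R$.

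For the ``only if'' direction, I would take a proper ideal $I$ of $A$, apply the general ZPI hypothesis to the proper ideal $I\times B$ of $R$ to factor it into primes of $R$, and read off the $A$-component. The key point is that the $B$-component of such a factorization must equal $B$, which forces every prime factor of the form $A\times Q$ to be absent, since a nonempty product of proper prime ideals of $B$ is itself proper. What survives is a product of factors $P_k\times B$, whose $A$-component displays $I$ as a product of primes of $A$; the argument for $B$ is symmetric, and this yields the claim.

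The main obstacle, though a minor one, is the careful bookkeeping of the empty-product and whole-ring conventions in the componentwise factorization: one must ensure both that a component equal to the entire ring contributes no prime factor and that a genuinely proper component cannot arise without at least one proper prime factor. Once this is handled cleanly, both implications fall out directly from the componentwise multiplication formula together with the opening reduction via Lemma \ref{lmp}(1).
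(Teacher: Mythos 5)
Your proposal is correct and follows essentially the same route as the paper: both rest on the observation that $A\times B$ is never local, so Lemma \ref{lmp}(1) identifies $n$-OA ideals of $R$ with prime ideals, and then on the componentwise ideal theory of a product ring (primes of the form $P\times B$ or $A\times Q$, componentwise multiplication) to pass factorizations back and forth between $R$ and its factors. Your version is in fact slightly tidier in two small respects --- you state the reduction ``$R$ is $n$-OAF iff $R$ is general ZPI'' explicitly, and you handle the whole-ring component via the empty-product convention rather than the paper's (not quite accurate) ``without loss of generality both $I,J$ are proper'' --- but the underlying argument is the same.
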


\begin{proof}
Assume that $R$ is an $n$-OAF-ring. Let $I$ be a proper ideal of $A$. Since $R$ is an $n$-OAF-ring, $I\times B=\prod_{i=1}^m J_i$ has an $n$-OA factorization of $I\times B$. As $R$ is not a local ring, by Lemma \ref{lmp} (2), $J_i$'s are all prime ideals of $R$, and thus $J_i=P_i\times B$ for some prime ideal $P_i$ of $A$. This gives $I\times B=(P_1P_2\cdots P_m)\times B$ which implies that $I=P_1P_2\cdots P_m$, that is $A$ is a general ZPI-ring. Likewise, $B$ is a general ZPI-ring. For the converse, assume that $A,B$ are general ZPI-rings. Let $I\times J$ be a proper ideal of $R$, where $I$ is an ideal of $A$ and $J$ is an ideal of $B$. Then without loss of generality, we may assume that $I,J$ are proper. As $A,B$ are general ZPI-rings, we can write $I=P_1P_2\cdots P_m$ and $J=Q_1Q_2\cdots Q_t$ for some prime ideals $P_i$ of $A$ and $Q_j$ of $B$. This implies that $I\times J=(P_1\times B)(P_2\times B)\cdots (P_m\times B)(A\times Q_1)(A\times Q_2)\cdots (A\times Q_t)$ a finite product of primes, and hence an $n$-OA factorization of $I\times J$. Consequently, $R$ is an $n$-OAF-ring.
 \end{proof}

\begin{cor}\label{ccar}
Let $R=A_1\times A_2\times\cdots\times A_m$, where $m\geq 1$, $A_i$'s are rings. Let $n\in\mathbb{N}$. Then $R$ is an $n$-OAF-ring if and only if $A_i$ is a general ZPI-ring for every $i=1,2,\ldots,m$.
\end{cor}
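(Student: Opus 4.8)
The plan is to argue by induction on $m$, taking Proposition \ref{car} as the base case $m=2$. The organising observation is that as soon as $R$ is a direct product of two or more (nonzero) rings it fails to be local, so Lemma \ref{lmp}(1) forces every $n$-OA ideal of such an $R$ to be prime. Consequently, for any product of at least two rings, possessing an $n$-OA factorization of every proper ideal is the same as possessing a factorization into prime ideals, i.e. being a general ZPI-ring. I will use this equivalence repeatedly to translate the $n$-OAF hypothesis into the general ZPI conclusion and back.

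Concretely, for $m\geq 2$ I would write $R=A_1\times B$ with $B=A_2\times\cdots\times A_m$ and apply Proposition \ref{car}: $R$ is $n$-OAF if and only if both $A_1$ and $B$ are general ZPI-rings. It then remains to relate the general ZPI property of $B$ to that of the individual factors $A_2,\ldots,A_m$. When $m-1\geq 2$, the factor $B$ is itself a product of at least two rings, hence non-local, so by the observation above $B$ is general ZPI exactly when $B$ is $n$-OAF; the inductive hypothesis then gives that this holds if and only if each $A_i$ with $2\leq i\leq m$ is a general ZPI-ring. Chaining these equivalences shows that $R$ is $n$-OAF if and only if each $A_i$ is a general ZPI-ring, completing the induction. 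When $m-1=1$, that is $m=2$, no recursion is needed and the claim is precisely Proposition \ref{car}; the substantive content is therefore the range $m\geq 2$.

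The step I expect to require the most care is reconciling the two factorization notions inside the induction: the inductive hypothesis is phrased for the $n$-OAF property of $B$, whereas Proposition \ref{car} delivers the general ZPI property of $B$. The bridge is exactly the non-locality of products together with Lemma \ref{lmp}(1), and one must also verify the elementary but indispensable direction that a prime ideal is always an $n$-OA ideal: if $a_1\cdots a_{n+1}\in I$ with $I$ prime, then some factor lies in $I$, whence either $a_{n+1}\in I$ or $a_1\cdots a_n\in I$, so that a prime factorization genuinely constitutes an $n$-OA factorization. With these two observations in place, the equivalence between the $n$-OAF and general ZPI properties for non-local products is immediate, and the remainder of the argument is bookkeeping with the identifications of ideals in a finite direct product.
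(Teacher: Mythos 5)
Your argument is correct for $m\geq 2$ and is essentially the paper's own proof: the paper's entire justification is ``Proposition \ref{car} and mathematical induction,'' and you supply exactly the details that make that induction run, namely that a product of two or more rings is never local, so that by Lemma \ref{lmp}(1) (together with the trivial fact that prime ideals are $n$-OA) an $n$-OA factorization in such a ring is literally a prime factorization. This bridge between the $n$-OAF and general ZPI properties for non-local rings is the same one the paper later isolates as Lemma \ref{rm}(1), so no new idea is involved; your proof is a faithful expansion of the paper's one-line argument.

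The one point worth flagging concerns the case $m=1$, which the statement includes ($m\geq 1$) but which you, like the paper, silently drop: your induction starts at the base case $m=2$, and you declare the substantive content to be the range $m\geq 2$. This omission is not repairable, because for $m=1$ and $n\geq 2$ the asserted equivalence is false: a local ring can be an $n$-OAF-ring without being a general ZPI-ring. The paper's own Example \ref{ex1} exhibits a local $4$-OAF ring $\mathbb{Z}_2[X,Y]/(X^2,XY,Y^4)$ that is not a general ZPI-ring (it is not even an OAF-ring, whereas every general ZPI-ring is; directly, the ideal $(x)$ is not a product of prime ideals, the only candidates being powers of $(x,y)$), and for arbitrary $n\geq 2$ the ring $\mathbb{Z}_2[X,Y]/(X,Y)^n$ does the same job. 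Only the implication ``each $A_i$ is a general ZPI-ring $\Rightarrow$ $R$ is an $n$-OAF-ring'' survives at $m=1$. So your restriction to $m\geq 2$ is mathematically the right call, but it should be presented as a correction to the corollary as printed (the hypothesis should read $m\geq 2$, equivalently $R$ non-local) rather than as a case with nothing to prove.
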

\begin{proof}
It can be easily verified by using Proposition \ref{car} and mathematical induction.
\end{proof}

\section{Polynomial ring, Formal power series ring and the ring $A+XB[X]$}
The conditions under which the polynomial ring $R[X]$, the formal power series ring $R[[X]]$, and the ring $A+XB[X]$ are $n$-OAF-rings are now determined. The following result is required first.

\begin{lem}\label{rm}
Let $R$ be a ring and $n\in\mathbb{N}$. The following statements are satisfied.
\begin{enumerate}
\item If $R$ is not a local ring, then $R$ is a general ZPI ring if and only if $R$ is an $n$-OAF-ring.
\item If $R[[X]]$ or $R[X]$ is an $n$-OAF-ring, then $R$ is an $n$-OAF-ring.
\end{enumerate}
\end{lem}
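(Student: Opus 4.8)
The plan is to reduce both parts to results already established in the excerpt; neither requires new machinery. For part (1), I would lean entirely on Lemma \ref{lmp}(1), which asserts that when $R$ is not local the $n$-OA ideals of $R$ are precisely its prime ideals. Once that identification is invoked, the two factorization conditions coincide verbatim. Concretely, if $R$ is a general ZPI-ring, every proper ideal $I$ can be written as $I=\prod_{i=1}^m P_i$ with each $P_i$ prime; since $R$ is not local, each $P_i$ is an $n$-OA ideal by Lemma \ref{lmp}(1), so this is already an $n$-OA-factorization and $R$ is an $n$-OAF-ring. Conversely, starting from an $n$-OA-factorization $I=\prod_{i=1}^m I_i$ of a proper ideal $I$, the same lemma forces each $I_i$ to be prime, so $I$ is a product of primes and $R$ is a general ZPI-ring.

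For part (2), I would exploit that $R$ is a quotient of each extension ring. The evaluation-at-zero map (respectively the constant-term map) gives ring isomorphisms $R\cong R[X]/(X)$ and $R\cong R[[X]]/(X)$, and in each case $(X)$ is a proper ideal. Hence if $R[X]$ (respectively $R[[X]]$) is an $n$-OAF-ring, then Proposition \ref{prp5}---which states that the quotient of an $n$-OAF-ring by a proper ideal is again an $n$-OAF-ring---applies directly to $(X)$ and yields that $R[X]/(X)\cong R$ (respectively $R[[X]]/(X)\cong R$) is an $n$-OAF-ring.

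I do not anticipate a genuine obstacle in either part: part (1) is a formal consequence of the local/non-local dichotomy for $n$-OA ideals packaged in Lemma \ref{lmp}, and part (2) is an immediate invocation of the quotient-stability recorded in Proposition \ref{prp5}. The only point deserving a moment's attention is checking that the factorizations transport in both directions in part (1)---that a product of $n$-OA ideals which all happen to be prime genuinely counts as a general-ZPI factorization, and conversely that a product of primes counts as an $n$-OA-factorization---but this is transparent once the two ideal classes have been identified via Lemma \ref{lmp}(1).
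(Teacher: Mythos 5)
Your proposal is correct and follows exactly the paper's route: part (1) is the identification of $n$-OA ideals with prime ideals in the non-local case via Lemma \ref{lmp}(1), and part (2) is the application of Proposition \ref{prp5} to the isomorphisms $R[[X]]/(X)\cong R\cong R[X]/(X)$. The paper states both steps tersely; your write-up merely spells out the routine verifications.
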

\begin{proof}
\begin{enumerate}
    \item Follows from Lemma \ref{lmp} (1).
    \item If $R[[X]]$ or $R[X]$ is an $n$-OAF-ring, then by Proposition \ref{prp5} and the isomorphisms $R[[X]]/(X)\cong R\cong R[X]/(X)$, we have the desired result.
\end{enumerate}
\end{proof}

Recall from \cite{von} that a commutative ring $R$ is said to be a \textit{von Neumann regular ring} if for each $a\in R$ there exists $x\in R$ such that $a=a^2x$. It is well known that $R$ is a von Neumann regular ring if and only if its every proper ideal is idempotent, that is $I=I^2$ if and only if its every proper ideal is a radical ideal, that is $I=\sqrt{I}$ \cite{JaTe}.
\begin{prop}\label{poly}
Let $R$ be a ring and $n\in\mathbb{N}$. The following are equivalent.
\begin{enumerate}
    \item $R[X]$ is an $n$-OAF-ring.
    \item $R[X]$ is an OAF-ring. 
    \item $R$ is a Noetherian von Neumann regular ring. 
    \item $R$ is a finite direct product of fields. 
\end{enumerate}
\end{prop}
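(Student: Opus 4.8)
The plan is to reduce all four conditions to the classical structure of general ZPI-rings, the key leverage being that a polynomial ring is never local, so that Lemma~\ref{rm}(1) applies directly to $R[X]$. First I would record that $R[X]$ is never a local ring: for any maximal ideal $\mathfrak{m}$ of $R$, both $(\mathfrak{m},X)$ and $(\mathfrak{m},X-1)$ have residue field $R/\mathfrak{m}$, hence are maximal, and they are distinct. Applying Lemma~\ref{rm}(1) to the non-local ring $R[X]$ then shows that each of (1) and (2) is equivalent to the single assertion that $R[X]$ is a general ZPI-ring: for arbitrary $n$ this gives (1), and the case $n=2$ (where an $n$-OAF-ring is exactly an OAF-ring) gives (2). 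In particular (1) and (2) are equivalent, and it remains only to prove that $R[X]$ is a general ZPI-ring if and only if (4) holds, together with the equivalence of (3) and (4).

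For (3)$\iff$(4): a field is Noetherian and von Neumann regular, and both properties pass to finite direct products, so (4)$\Rightarrow$(3). Conversely, a von Neumann regular ring is reduced and zero-dimensional, so a Noetherian von Neumann regular ring is Artinian and reduced; an Artinian reduced ring is a finite direct product of fields, giving (3)$\Rightarrow$(4). For the easy half of the remaining equivalence, (4)$\Rightarrow$``$R[X]$ general ZPI'', I would write $R\cong k_1\times\cdots\times k_m$ with each $k_i$ a field, so that $R[X]\cong k_1[X]\times\cdots\times k_m[X]$ is a finite direct product of principal ideal domains, each a general ZPI-ring; one can alternatively deduce (1) immediately from this decomposition via Corollary~\ref{ccar}.

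The substantive direction is ``$R[X]$ general ZPI $\Rightarrow$ (4)'', and this is where I expect the main obstacle to lie. Here I would invoke the classical structure theorem (see \cite{GILM}): a general ZPI-ring is a finite direct product of Dedekind domains and special principal ideal rings (SPIRs). Since the idempotents of $R[X]$ coincide with those of $R$, the splitting of $R[X]$ into indecomposable factors is induced by a decomposition $R=\prod_i R_i$ into indecomposable rings, so that $R[X]\cong\prod_i R_i[X]$ with each factor $R_i[X]$ indecomposable. By uniqueness of the decomposition into indecomposables, each $R_i[X]$ must be a Dedekind domain or a SPIR; but a SPIR is local while $R_i[X]$ is never local by the first paragraph, so every $R_i[X]$ is a Dedekind domain and in particular a domain, forcing each $R_i$ to be a domain. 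Finally, a general ZPI-ring is Noetherian of dimension at most $1$ (by \cite{LARS}), so $R_i$ is Noetherian and $\dim R_i[X]=\dim R_i+1\le 1$ yields $\dim R_i=0$; a zero-dimensional Noetherian domain is a field. Hence $R=\prod_i R_i$ is a finite direct product of fields, which is (4).

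The delicate points I anticipate are precisely the two structural matchings in the last paragraph: identifying the structure-theoretic factorization of $R[X]$ with the idempotent-induced decomposition $\prod_i R_i[X]$, and using the non-locality of polynomial rings to rule out the SPIR factors. Once those are in place, the reduction of each Dedekind factor $R_i[X]$ to a field base via the dimension formula is routine, and assembling the implications established above closes the equivalence of (1)--(4).
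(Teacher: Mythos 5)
Your proof is correct, but it takes a genuinely different route from the paper's. Both arguments begin the same way: $R[X]$ is never local, so Lemma~\ref{rm}(1) identifies the $n$-OAF property of $R[X]$ with $R[X]$ being a general ZPI-ring, and both close the circle with (4)$\Rightarrow$(1) via the product decomposition $R[X]\cong F_1[X]\times\cdots\times F_m[X]$ and Corollary~\ref{ccar}. The difference is in the substantive implications: the paper handles (1)$\Rightarrow$(2) by citing \cite[Theorem 2.8]{ABS1} (general ZPI $\Rightarrow$ OAF) and handles (2)$\Rightarrow$(3)$\Rightarrow$(4) entirely by citing \cite[Corollary 2.6]{ABS1}, i.e., it leans on the earlier OAF-ring paper as a black box. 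You instead prove the hard direction ``$R[X]$ general ZPI $\Rightarrow$ (4)'' from scratch: the classical structure theorem from \cite{GILM} (general ZPI-ring $=$ finite product of Dedekind domains and SPIRs), the fact that idempotents of $R[X]$ lie in $R$ (so each structure factor is itself a polynomial ring $R_i[X]$), non-locality of polynomial rings to exclude the SPIR factors, and a dimension count to force each $R_i$ to be a zero-dimensional domain, hence a field; likewise your (3)$\Leftrightarrow$(4) is argued directly through Artinian reduced rings rather than by citation. What your approach buys is self-containedness relative to \cite{ABS1}, at the cost of invoking heavier classical structure theory and a longer argument. Two simplifications are available in your final paragraph: you do not actually need uniqueness of the indecomposable decomposition, since every idempotent of $R[X]$ already lies in $R$, so the idempotents realizing the structure-theoretic factorization automatically induce a decomposition $R=\prod_i R_i$ with factors $R_i[X]$; and the crude inequality $\dim R_i[X]\geq \dim R_i+1$, valid for arbitrary rings, suffices, after which ``zero-dimensional domain $\Rightarrow$ field'' needs no Noetherian hypothesis.
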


\begin{proof}
$(1)\Rightarrow(2)$ Assume that $R[X]$ is an $n$-OAF ring. Since $R[X]$ is not a local ring, by Lemma \ref{rm}, $R[X]$ is a general ZPI-ring. Thus by \cite[Theorem 2.8]{ABS1}, $R[X]$ is an OAF-ring.\\
$(2)\Rightarrow (3)\Rightarrow (4)$ Follows from \cite[Corollary 2.6]{ABS1}.\\
$(4)\Rightarrow (1)$ Let $R$ be a finite direct product of fields, that is $R=F_1\times F_2\times\cdots\times F_n$ where $F_i$'s are fields. Then $R[X]\cong F_1[X]\times F_2[X]\times\cdots\times F_n[X]$. Since $F_i$ is a field, $F_i[X]$ is a PID which is clearly a general ZPI-ring. Then by Corollary \ref{ccar}, $R[X]$ is an $n$-OAF ring.
\end{proof}
Recall from \cite{WOD} that an ideal $I$ of $R$ is said to be a simple ideal if $I^2\subseteq J\subseteq I$ for some ideal $J$ of $R$, then $J=I^2$ or $J=I$. 

\begin{lem}\label{impt}
Let $R$ be a  ring. Then  $M+XR[[X]]$ is a simple ideal of $R[[X]]$ if and only if $M^2=M$ for each $M \in Max(R)$.
\end{lem}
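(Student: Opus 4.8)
The plan is to fix a maximal ideal $M \in \mathrm{Max}(R)$, write $\mathfrak{M} = M + XR[[X]]$, and reduce the simplicity of $\mathfrak{M}$ to a one-dimensionality statement about $\mathfrak{M}/\mathfrak{M}^2$ over the residue field. Observe first that $\mathfrak{M}$ is a maximal ideal of $R[[X]]$, since the evaluation $X \mapsto 0$ induces $R[[X]]/\mathfrak{M} \cong R/M =: k$, a field. By the definition recalled just above the statement, $\mathfrak{M}$ is simple precisely when there is no ideal of $R[[X]]$ lying strictly between $\mathfrak{M}^2$ and $\mathfrak{M}$.

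The first key step is to compute $\mathfrak{M}^2$ explicitly. By expanding products $fh$ with $f,h \in \mathfrak{M}$ and collecting coefficients, I would establish
$$\mathfrak{M}^2 = M^2 + XMR[[X]] + X^2R[[X]].$$
The inclusion $\subseteq$ is a direct coefficient computation: the constant term of a product of two elements of $\mathfrak{M}$ lands in $M^2$ and the coefficient of $X$ lands in $M$. The reverse inclusion follows by exhibiting the generators $mm'$ (with $m,m' \in M$), $X(mg)$, and $X^2 g$ as products of two elements of $\mathfrak{M}$.

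Next I would analyse the quotient $\mathfrak{M}/\mathfrak{M}^2$ via the map sending $\sum_{i \geq 0} a_i X^i \in \mathfrak{M}$ (so $a_0 \in M$) to the pair $(\overline{a_0}, \widetilde{a_1}) \in (M/M^2) \oplus (R/M)$, where $\overline{a_0}$ is the class of $a_0$ modulo $M^2$ and $\widetilde{a_1}$ the class of $a_1$ modulo $M$. This is a surjective group homomorphism whose kernel is exactly $\mathfrak{M}^2$ by the formula above, so $\mathfrak{M}/\mathfrak{M}^2 \cong (M/M^2) \oplus k$. Since $\mathfrak{M}$ annihilates $\mathfrak{M}/\mathfrak{M}^2$, the $R[[X]]$-action factors through $R[[X]]/\mathfrak{M} = k$, so $\mathfrak{M}/\mathfrak{M}^2$ is a $k$-vector space of dimension $\dim_k(M/M^2) + 1$, and the ideals $J$ with $\mathfrak{M}^2 \subseteq J \subseteq \mathfrak{M}$ correspond bijectively to its $k$-subspaces.

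Finally I would conclude: the linear term always contributes, so $\dim_k(\mathfrak{M}/\mathfrak{M}^2) \geq 1$ and $\mathfrak{M}^2 \subsetneq \mathfrak{M}$; and $\mathfrak{M}$ is simple if and only if this $k$-vector space admits no proper nonzero subspace, that is, if and only if its dimension equals $1$, equivalently $\dim_k(M/M^2) = 0$, equivalently $M = M^2$. Running this equivalence over all $M \in \mathrm{Max}(R)$ yields the stated biconditional. I expect the main obstacle to be the power-series subtlety in the computation of $\mathfrak{M}^2$: one must not conflate $MR[[X]]$ with the set of all power series whose coefficients lie in $M$, since these can differ when $M$ is not finitely generated. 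The kernel computation should therefore rely only on the constant and linear coefficients, which is all that the argument requires.
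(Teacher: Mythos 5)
Your proof is correct, and it reorganizes the paper's argument into a more structural form rather than following it step by step. Both proofs rest on the same core computation (writing $\mathfrak{M}=M+XR[[X]]$ and $k=R/M$ as you do): an element of $\mathfrak{M}^2$ is precisely a power series with constant term in $M^2$ and linear coefficient in $M$. But where you pass to the quotient $\mathfrak{M}/\mathfrak{M}^2$, identify it $k$-linearly with $(M/M^2)\oplus k$, and read off simplicity as a one-dimensionality statement, the paper argues each direction by hand: for the forward direction it exhibits the explicit ideal $I=M+XM[[X]]+X^2R[[X]]$ squeezed strictly between $\mathfrak{M}^2$ and $\mathfrak{M}$ (in your language, the preimage of the subspace $(M/M^2)\oplus 0$) and compares constant terms to get $M=M^2$; for the converse it takes an arbitrary intermediate ideal $I$, forms the ideal $I_1$ of linear coefficients of elements of $I$, and uses maximality of $M$ to split into the cases $I_1=M$ (forcing $I=\mathfrak{M}^2$) and $I_1=R$ (forcing $I=\mathfrak{M}$) --- which is exactly your subspace dichotomy, carried out in the $k$-coordinate. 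Your version buys uniformity (a single dimension count settles both directions) and extra information (the full lattice of ideals between $\mathfrak{M}^2$ and $\mathfrak{M}$ is the subspace lattice of $(M/M^2)\oplus k$), at the modest cost of verifying that the group isomorphism is $k$-linear, which needs the observation that the action on $\mathfrak{M}/\mathfrak{M}^2$ factors through constants; the paper's version is more elementary, using only ideal manipulations. Finally, your caution about $MR[[X]]$ versus $M[[X]]$ is legitimate in general but turns out to be immaterial here: modulo $X^2R[[X]]$ both $XMR[[X]]$ and $XM[[X]]$ collapse to $XM$, which is why your formula for $\mathfrak{M}^2$ and the paper's formula $\mathfrak{M}^2=M^2+XM[[X]]+X^2R[[X]]$ describe the same ideal.
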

\begin{proof}
$(\Rightarrow)$ Let $M \in Max(R)$ such that $M+XR[[X]]$ is a simple ideal of $R[[X]]$. Then note that $(M+X R[[X]])^2=M^2+XM[[X]]+X^2 R[[X]]$. Put $I=M+XM[[X]]+X^2R[[X]]$. Then we have $I$ is an ideal of $R[[X]]$ and $(M+X R[[X]])^2 \subseteq I \subsetneq M+XR[[X]]$ which implies that $M=M^2$.\\
$(\Leftarrow)$ Let $I$ be an ideal of $R[[X]]$ such that $(M+X R[[X]])^2 \subseteq I \subseteq M+X R[[X]]$ and let $I_1=\left\{a \in R \mid\right.$ there exists $f=\sum_{i=0}^{\infty} a_i X^i \in I$ with $\left.a_1=a\right\}$. Then $I_1$ is an ideal of $R$ and $M \subseteq I_1 \subseteq R$. Assume that $I_1=M$. Now we show that $I=M+X M[[X]]+X^2 R[[X]]$. In fact $M+X M[[X]]+X^2 R[[X]] \subseteq I$. Conversely, let $f=\sum_{i=0}^{\infty} a_i X^i \in I \subseteq M+X R[[X]]$, then $a_1 \in I_1=M$ so $f \in M+X M[[X]]+X^2 R[[X]]$. This shows that $I=(M+XR[[X]])^2$. If $I_1=R$, then we show that $I=M+X R[[X]]$. In fact, $I \subseteq M+X R[[X]]$. Conversely, $M+X M[[X]]+X^2 R[[X]] \subseteq I$, so $M \subseteq I$ and $X^2 R[[X]] \subseteq I$. We prove that $X \in I$. As $I_1=R$, then $1 \in I_1$, so there exists $g=\sum_{i=0}^{\infty} b_i X^i \in I$ with $b_1=1$, we conclude that  $X=g-b_0-\sum_{i=2}^{\infty} b_i X^i\in I$. Then $XR[[X]]\subseteq I$ and $M+XR[[X]] \subseteq I$ which implies that $I=M+XR[[X]]$.
\end{proof}

\begin{thm}\label{series}
Let $R$ be a ring and $n\in\mathbb{N}$. Then $R[[X]]$ is an $n$-OAF-ring if and only if $R$ is a finite direct product of fields.
\end{thm}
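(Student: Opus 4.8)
The plan is to prove the two implications separately, the reverse one being routine and the forward one carrying the real content.

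For the implication from right to left, suppose $R=F_1\times\cdots\times F_k$ is a finite direct product of fields. Then $R[[X]]\cong F_1[[X]]\times\cdots\times F_k[[X]]$, and each $F_i[[X]]$ is a discrete valuation ring, hence a PID and in particular a general ZPI-ring. Corollary \ref{ccar} then immediately gives that $R[[X]]$ is an $n$-OAF-ring.

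For the forward implication, assume $R[[X]]$ is an $n$-OAF-ring. (When $n=1$ this just says $R[[X]]$ is a general ZPI-ring, and the argument below simplifies since all $n$-OA ideals are automatically prime; so I would focus on $n\geq 2$.) First I would pin down the Krull dimension. Since every proper ideal of $R[[X]]$, in particular every proper $2$-generated one, has an $n$-OA-factorization, Theorem \ref{thm12} yields $\dim R[[X]]\leq 1$. For the reverse inequality note that $X$ lies in every maximal ideal $M+XR[[X]]$ of $R[[X]]$, so $X$ belongs to the Jacobson radical; but $X$ is not nilpotent. If $\dim R[[X]]=0$ held, then the nilradical and the Jacobson radical of $R[[X]]$ would coincide, forcing $X$ to be nilpotent, a contradiction. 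Hence $\dim R[[X]]=1$, and combined with the standard bound $\dim R[[X]]\geq\dim R+1$ this gives $\dim R=0$.

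Next I would split according to whether $R$ is local. If $R$ is not local, then $R[[X]]$ is not local either, so by Lemma \ref{rm}(1) it is a general ZPI-ring; in particular $R[[X]]$ is Noetherian, whence its quotient $R\cong R[[X]]/XR[[X]]$ is Noetherian as well. Being Noetherian of dimension $0$, $R$ is Artinian and so decomposes as a finite product of local rings $R\cong\prod_j R_{M_j}$. To force each factor to be a field, I would show that every maximal ideal $\mathfrak{M}=M+XR[[X]]$ of $R[[X]]$ is a simple ideal: given a proper ideal $J$ with $\mathfrak{M}^2\subseteq J\subsetneq\mathfrak{M}$, take an $n$-OA-factorization $J=\prod_i J_i$; since $J\subseteq J_i$ we get $\mathfrak{M}^2\subseteq J_i$, so $\sqrt{J_i}=\mathfrak{M}$, and because $R[[X]]$ is non-local each $J_i$ is prime by Lemma \ref{lmp}(1), hence $J_i=\mathfrak{M}$. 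Thus $J=\mathfrak{M}^m\in\{\mathfrak{M},\mathfrak{M}^2\}$, proving simplicity. Lemma \ref{impt} then delivers $M=M^2$ for every $M\in Max(R)$, and applying Nakayama's lemma in each Artinian local factor $R_{M_j}$ (whose maximal ideal is finitely generated and idempotent) shows that $R_{M_j}$ is a field. Hence $R$ is a finite direct product of fields.

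Finally, if $R$ is local then so is $R[[X]]$, and since $\dim R[[X]]=1$, Theorem \ref{thm12} shows $R[[X]]$ is a domain (for $n=1$ one uses instead that a one-dimensional local general ZPI-ring is a DVR). Its subring $R$ is then a domain of dimension $0$, i.e. a field, which is a trivial finite product of fields. The step I expect to be the main obstacle is the simplicity argument in the non-local case, namely ruling out any ideal strictly between $\mathfrak{M}^2$ and $\mathfrak{M}$, since this is exactly what activates Lemma \ref{impt}; the dimension bookkeeping (the bound $\dim R[[X]]\geq\dim R+1$ together with the lower bound $\dim R[[X]]\geq 1$ coming from $X$) is the secondary point requiring care.
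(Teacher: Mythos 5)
Your proof is correct, and its skeleton coincides with the paper's: the reverse direction via the decomposition $R[[X]]\cong\prod_i F_i[[X]]$ and Corollary \ref{ccar}, the local/non-local split in the forward direction, and the local case settled by $\dim(R[[X]])\geq\dim(R)+1$ together with Theorem \ref{thm12} (forcing $R[[X]]$ to be a one-dimensional domain, hence $R$ a zero-dimensional domain, i.e. a field). Where you genuinely diverge is the execution of the non-local case. The paper obtains both the Noetherianity of $R[[X]]$ and the simplicity of its maximal ideals in one stroke from Wood's structure theorem for general ZPI-rings \cite[Theorem 3]{WOD}, then feeds $M=M^2$ (via Lemma \ref{impt}) into Satyanarayana's theorem \cite[Theorem 3.2]{THH} to conclude that $R$ is a finite product of fields. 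You instead prove simplicity of $\mathfrak{M}=M+XR[[X]]$ directly from the factorization hypothesis: any proper $J$ with $\mathfrak{M}^2\subseteq J\subsetneq\mathfrak{M}$ factors into $n$-OA ideals, each of which is prime by Lemma \ref{lmp}(1) (since $R[[X]]$ is non-local) and has radical $\mathfrak{M}$, so $J$ is a power of $\mathfrak{M}$ and hence equals $\mathfrak{M}^2$ --- a nice self-contained substitute for Wood's simplicity statement. You likewise replace the Satyanarayana citation by the elementary chain ``Noetherian and $\dim(R)=0$, hence Artinian, hence a finite product of Artinian local rings,'' finishing with Nakayama applied to each finitely generated idempotent maximal ideal; this is exactly what your dimension computation ($\dim(R[[X]])=1$, so $\dim(R)=0$) buys you, and it is a step the paper's non-local case never needs. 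Two small remarks: your separate lower bound $\dim(R[[X]])\geq 1$ via $X\in J(R[[X]])$ is correct but redundant, since $\dim(R[[X]])\geq\dim(R)+1\geq 1$ already gives it; and your phrase ``in particular $R[[X]]$ is Noetherian'' silently invokes the nontrivial fact that general ZPI-rings are Noetherian, which is precisely (part of) the theorem of Wood the paper cites --- so you have not fully eliminated that dependency and should cite \cite[Theorem 3]{WOD} there.
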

\begin{proof}
$(\Rightarrow)$ Assume that $R[[X]]$ is an $n$-OAF-ring. First note that\\ $Max(R[[X]])=\{M+XR[[X]]\ |\ M \in Max(R) \}$. Assume that $R$ is a non local ring, then $R[[X]]$ is a non local ring. By Lemma \ref{rm}, $R[[X]]$ is a general ZPI-ring. Then by \cite[Theorem 3]{WOD}, $R[[X]]$ is Noetherian and for each $M \in Max(R)$, $M+XR[[X]]$ are simple ideals. Which implies that $R$ is Noetherian and  $M+XR[[X]]$ are simple for each $M \in Max(R)$. By Lemma \ref{impt} $M^2=M$ for each $M \in Max(R)$, which implies that $R$ is isomorphic to a finite product of fields by \cite[Theorem 3.2]{THH}. Now, assume that $R$ is a local ring. Now we will show that $R$ is a field. As $dim(R[[X]]) \geq dim(R)+1$ and $R[[X]]$ is an $n$-OAF ring, by Theorem \ref{thm12}, $R[[X]]$ is an integral domain (so $R$ is an integral domain) with $dim(R[[X]])=1$ (so $dim(R)=0$). Therefore $R$ is an integral domain with $dim(R)=0$, that is, $R$ is a field.\\
$(\Leftarrow)$ Let $R$ be a finite direct product of fields, that is, $R=F_1\times F_2\times\cdots\times F_m$, where $m\geq 1$ and $F_i$'s are all fields. Then by Chinese remainder theorem, $R[[X]]\cong F_1[[X]]\times F_2[[X]]\times\cdots\times F_m[[X]]$. As $F_i$ is a field, $F_i[[X]]$ is a PID which is clearly an $n$-OAF ring. Then by Corollary \ref{ccar}, $R[[X]]\cong F_1[[X]]\times F_2[[X]]\times\cdots\times F_m[[X]]$ is an $n$-OAF-ring.

\end{proof}
Now, we study the transfer of the $n$-OAF property of the ring $A+XB[X]$.
\begin{prop}\label{extension}
Let $n\in\mathbb{N}$ and $A \subseteq B$ be an extension of commutative rings. If $A+X B[X]$ is an $n$-OAF-ring, then $A$ is a finite direct product of fields.
\end{prop}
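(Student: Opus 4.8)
The plan is to exploit the surjection $R:=A+XB[X]\twoheadrightarrow A$ together with the dimension and reducedness constraints forced by the $n$-OAF hypothesis. First I would record the features of $R$ that make the argument run. The set $XB[X]$ is a proper ideal of $R$ with $R/XB[X]\cong A$, so by Proposition \ref{prp5} the ring $A$ is itself an $n$-OAF-ring. Next I would observe that $X$ is a nonzerodivisor of $R$ (if $Xf=0$ for $f\in R\subseteq B[X]$ then $f=0$) and that a prime $\mathfrak p$ of $R$ contains $X$ if and only if it contains $XB[X]$: for the nontrivial implication, $(Xb)^2=X\cdot(Xb^2)\in\mathfrak p$ for every $b\in B$ since $Xb^2\in XB[X]\subseteq R$ and $X\in\mathfrak p$, whence $Xb\in\mathfrak p$ and then $X^kb=X^{k-1}(Xb)\in\mathfrak p$, so $XB[X]\subseteq\mathfrak p$. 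Consequently the primes of $R$ containing $X$ correspond bijectively and order-preservingly to the primes of $A$, while the primes avoiding $X$ correspond to the primes of the localization $R[1/X]=B[X,1/X]$ (one checks this equality since inverting $X$ recovers all of $B$ via $b=X^{-1}(Xb)$).

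The second step forces $\dim A=0$. Since $R$ is $n$-OAF, Theorem \ref{thm12} gives $\dim R\le 1$. On the other hand I claim $\dim R\ge\dim A+1$: lifting a chain $\mathfrak p_0\subsetneq\cdots\subsetneq\mathfrak p_d$ of primes of $A$ (with $d=\dim A$) to the chain $P_0\subsetneq\cdots\subsetneq P_d$ of primes of $R$ containing $XB[X]$, each $P_i$ contains the nonzerodivisor $X$ and hence is not a minimal prime of $R$ (every element of a minimal prime is a zerodivisor); choosing a minimal prime $\mathfrak q\subsetneq P_0$ and prepending it produces a chain of length $d+1$. Combining the two bounds yields $\dim A=0$ and $\dim R=1$.

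The heart of the proof is to show that $A$ is reduced, which I would obtain by proving that $R_{\mathfrak M}$ is an integral domain for every maximal ideal $\mathfrak M$ of $R$. I first argue that every maximal ideal of $R$ has height $1$. If $X\in\mathfrak M$, then $\mathfrak M$ is not minimal (it contains a nonzerodivisor), yet $\dim R=1$, so $\mathrm{ht}\,\mathfrak M=1$. If $X\notin\mathfrak M$, then $\mathfrak M$ corresponds to a maximal ideal $\mathfrak M'$ of $B[X,1/X]$, and since every prime below $\mathfrak M$ also avoids $X$ one has $\mathrm{ht}_{B[X,1/X]}\mathfrak M'=\mathrm{ht}_R\mathfrak M$; but no minimal prime of $B[X,1/X]$ is maximal, because each has the form $\mathfrak q B[X,1/X]$ with $\mathfrak q\in Min(B)$ and the quotient $(B/\mathfrak q)[X,1/X]$ is a Laurent polynomial ring over a domain, hence not a field. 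Thus $\mathrm{ht}\,\mathfrak M=\mathrm{ht}\,\mathfrak M'=1$ in this case as well. Now $R_{\mathfrak M}$ is a local $n$-OAF-ring (Proposition \ref{ploc}) of dimension $1$, so Proposition \ref{prps} shows that $R_{\mathfrak M}$ is a domain, in particular reduced. As reducedness is a local property, $R$ is reduced, and therefore so is its subring $A$.

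Finally I would assemble the conclusion: $A$ is reduced of dimension zero, and by Proposition \ref{promin} the set $Min(A)=Spec(A)$ is finite; since distinct maximal ideals of a zero-dimensional ring are pairwise comaximal, the Chinese Remainder Theorem identifies $A\cong\prod_i A/\mathfrak m_i$ with a finite direct product of fields. I expect the reducedness step to be the main obstacle: the purely dimensional argument only delivers $\dim A=0$, and a zero-dimensional $n$-OAF-ring may well carry nilpotents (for instance $k[x]/(x^2)$), so the specific structure of $A+XB[X]$—encoded in the nonzerodivisor $X$ and in the absence of height-zero maximal ideals in $B[X,1/X]$—is exactly what is needed to exclude them.
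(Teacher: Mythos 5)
Your proposal is correct, but it takes a genuinely different route from the paper's. The paper exploits the fact that $A+XB[X]$ is never local, so Lemma \ref{rm} turns the $n$-OAF hypothesis into ``general ZPI-ring,'' and then imports the classical machinery of that theory: general ZPI-rings are Noetherian multiplication rings, whence $dim(A+XB[X])\leq 1$; a Krull-intersection argument ($P=\bigcap_{k\geq 1}M^k\subseteq M^2$) excludes a chain $p\subsetneq m$ in $A$ by deriving $B\subseteq mB$ and then invoking lying-over --- a step that rests on the assertion that $B$ is a finitely generated $A$-module, which the hypotheses do not justify and which is the weakest point of the published argument; finally Wood's simple-ideal property forces $m^2=m$ for every $m\in Max(A)$, and Satyanarayana's theorem converts ``Noetherian, zero-dimensional, idempotent maximal ideals'' into ``finite product of fields.'' You instead stay inside the paper's own $n$-OAF results: Theorem \ref{thm12} gives $dim(R)\leq 1$; the nonzerodivisor $X$ (together with the fact that elements of minimal primes are zerodivisors) gives $dim(R)\geq dim(A)+1$, hence $dim(A)=0$; splitting $Spec(R)$ along $X$ into $Spec(A)$ and $Spec(B[X,1/X])$ shows every maximal ideal of $R$ has height one, so Propositions \ref{ploc} and \ref{prps} make $R$ locally a domain, hence reduced, hence $A$ reduced; and Proposition \ref{promin} applied to $(0)$ in the $n$-OAF ring $A\cong R/XB[X]$ plus the CRT finishes. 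Your route buys independence from the external ZPI/multiplication-ring citations and, in particular, avoids the unjustified finite-generation claim; the paper's route buys brevity and yields the Artinian structure and idempotency of maximal ideals directly. One small point to patch: Theorem \ref{thm12} is stated only for $n\geq 2$, whereas the proposition allows $n=1$; this is harmless, since a product of nonunits is a nonunit, so every $n$-OA ideal is $(n+1)$-OA, and a $1$-OAF ring is thus $2$-OAF, letting you quote the theorem with $n=2$.
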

\begin{proof}
 If $A+X B[X]$ is an $n$-OAF ring, then $A$ is Noetherian. We prove first that $dim(A)=0$. Note that if $A+X B[X]$ is an $n$-OAF-ring, then it is a general ZPI-ring, so by \cite[Page 225, Exercise 10]{LARS}, it is also a multiplication ring, so $dim(A+XB[X])\leq 1$, by \cite[Page 210]{LARS}. If $p_1 \subset p_2 \subset \cdots \subset p_n$ is a chain of prime ideals of $A$, then $p_1+X B[X] \subset p_2+X B[X] \subset \cdots \subset p_n+X B[X]$ is a chain of prime ideals of $A+X B[X]$, which implies that $\operatorname{dim} A \leq \operatorname{dim}(A+X B[X])$, then $\operatorname{dim} A \leq 1$. Let $m \in \operatorname{Max}(A)$ and suppose that there exists $p \in \operatorname{Spec}(A)$ such that $p \subsetneq m$, then $M=m+X B[X] \in$ $\operatorname{Max}(A+X B[X]), P=p+X B[X] \in \operatorname{Spec}(A+X B[X])$ and $P \subsetneq M$. By \cite[Proposition 9.15]{LARS}, $P=\bigcap_{n=1}^{\infty} M^n \subset M^2=m^2+mXB[X]+X^2B[X]$, so $B \subseteq m B$. But $A \subseteq B$ is an integral extension of rings as $B$ is a finitely generated $A$- module. So there exists $q \in \operatorname{Spec}(B)$ such that $m \subseteq q$, then $m B \subseteq q \subsetneq B$ which is a contradiction. So $dim(A)=0$. As $A$ is Noetherian, we deduce that $A$ is Artinian. Moreover, for each $m \in \operatorname{Max}(A)$ the maximal ideal $M=m+X B[X]$ of $A+X B[X]$ is simple. Take $I=m+mXB[X]+X^2 B[X]$, then $I$ is an ideal of $A+X B[X]$ and $M^2 \subseteq I \subsetneq M$, so $M^2=I$ and then $m^2=m$. By \cite[Theorem 3.2]{THH}, $A$ is isomorphic to a finite direct product of fields.

\end{proof}

\section{$n$-OA factorization in trivial extension}
This section is dedicated to the study of $n$-OA factorization in the trivial extension $A\propto E$ of an $A$-module $E$. Let $E$ be an $A$-module. The trivial extension $A\propto E=A\bigoplus E$ is a commutative ring with componentwise addition and the multiplication $(a,e)(b,f)=(ab,af+be)$ for every $(a,e),(b,f)\in A\propto E$, \cite{idealization} and \cite{Huckaba}. If $I$ is an ideal of $A$ and $V$ is a submodule of $E$, then $I\propto V$ is an ideal of $A\propto E$ if and only if $IE\subseteq V$ \cite[Theorem 3.1]{idealization}. In this case, $I\propto V$ is called a homogeneous ideal of $A\propto E$. Also, every prime (maximal) ideal of $A\propto E$ has the form $I\propto E$ for some prime (maximal) ideal $I$ of $A$ \cite[Theorem 3.2]{idealization}. Also, $dim(A\propto E)=dim(A)$.\\

\begin{prop}\label{pdiv1} Let $R$ be a local ring with unique maximal ideal $M$ and $n\in\mathbb{N}$. Suppose that $M^n$ is a divided ideal and $M$ is nilpotent. Then $R$ is an $n$-OAF ring.
\end{prop}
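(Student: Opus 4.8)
The plan is to reduce everything to a single extraction step, driven by the divided-ness of $M^{n}$. First I would record the consequences of the hypotheses: since $M$ is nilpotent it is the only prime of $R$, so $R$ is zero-dimensional and its nonunits are exactly the elements of $M$. By Lemma \ref{lmp}(2), every ideal $J$ with $M^{n}\subsetneq J\subseteq M$ is $n$-OA (it is either $M$, which is prime, or strictly between $M^{n}$ and $M$). Thus, to prove that $R$ is $n$-OAF, it suffices to exhibit each proper ideal $I$ as a finite product of ideals lying strictly above $M^{n}$ and inside $M$. Two cases are immediate. If $I\not\subseteq M^{n}$, choose $y\in I\setminus M^{n}$; divided-ness of $M^{n}$ gives $M^{n}\subseteq yR\subseteq I\subseteq M$, so $M^{n}\subsetneq I\subseteq M$ and $I$ is already $n$-OA. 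If $I=0$, then $I=M^{t}$, where $t$ is the nilpotency index of $M$, a product of copies of the prime (hence $n$-OA) ideal $M$. So the real work is the case $0\neq I\subseteq M^{n}$.

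For this case the key observation is that divided-ness supplies a single principal $n$-OA ideal that swallows $I$. Assuming $R$ is not a field (otherwise there is nothing to prove), nilpotency forces $M\neq M^{2}$, so I fix $x\in M\setminus M^{2}$; then $x\notin M^{n}$ and divided-ness gives $M^{n}\subseteq xR\subseteq M$, so $(x)$ is $n$-OA. Since $I\subseteq M^{n}\subseteq xR$, every element of $I$ is divisible by $x$, and a direct check gives $I=(x)\,(I:x)$, where $(I:x)=\{r\in R : rx\in I\}$: the inclusion $x(I:x)\subseteq I$ is clear, and for $y\in I\subseteq xR$ one writes $y=xr$, whence $r\in(I:x)$ and $y\in x(I:x)$. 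Moreover $(I:x)$ is proper (it cannot contain $1$, else $x\in I\subseteq M^{n}$, contradicting $x\notin M^{n}$) and nonzero (it contains $I$). Iterating the identity on $(I:x)$ yields $I=(x)^{m}\,(I:x^{m})$ at each stage that has not yet stopped.

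Termination is controlled by nilpotency. Writing $\lambda(J)$ for the largest $p$ with $J\subseteq M^{p}$ (finite for $J\neq 0$), the relation $J=x\,(J:x)$ with $x\in M$ forces $J\subseteq M\,(J:x)$, hence $\lambda(J:x)\leq\lambda(J)-1$, so the levels strictly decrease. The recursion therefore halts after finitely many steps, necessarily at an ideal $K=(I:x^{m})$ with $K\not\subseteq M^{n}$; by the first paragraph such a $K$ is $n$-OA. This gives $I=(x)^{m}K$, a finite product of $n$-OA ideals. I would present the last two paragraphs as a single strong induction on $\lambda(I)$.

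The step I expect to be most delicate is the choice of what to extract. The natural guess, namely proving $I=M^{n}(I:M^{n})$ (i.e. that $M^{n}$ is a multiplication ideal), fails: divided-ness does not make $M^{n}$ a multiplication ideal, and already in a small example over $\mathbb{R}$ with $M^{3}=0$ and $M/M^{2}$ two-dimensional over $R/M$ one finds $M^{2}(I:M^{2})\subsetneq I$ for a one-dimensional $I\subset M^{2}$. What makes the argument work is extracting the \emph{thin} principal ideal $(x)$ rather than the \emph{fat} ideal $M^{n}$: divided-ness is precisely what guarantees the free containment $I\subseteq M^{n}\subseteq(x)$, making the clean division $I=(x)(I:x)$ available, while nilpotency guarantees that iterating it stops. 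I should note that the argument genuinely uses $n\geq 2$, since it needs $M\setminus M^{n}\neq\varnothing$; this is consistent with the fact that $1$-OAF means general ZPI, which the present hypotheses do not force.
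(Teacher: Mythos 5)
Your proof is correct and is essentially the paper's own argument: after disposing of $I=0$ and of ideals not contained in $M^n$ exactly as the paper does, you extract the same principal $n$-OA ideal $xR$, $x\in M\setminus M^2$, via dividedness and iterate the identity $I=(x)(I:x)$ (the paper writes this as $J=(Rx)I_1$ with $I_1$ proper), the only difference being the bookkeeping for termination --- the paper counts $k-n$ steps against the nilpotency index $k$ and derives the contradiction $J\subseteq M^k=(0)$, while you run a strong induction on the level $\lambda(I)$, which is the same mechanism. Your closing caveat that the argument needs $n\geq 2$ is a genuine catch rather than a defect: the paper's proof makes the same tacit assumption (it asserts that $x\in M\setminus M^2$ satisfies $x\notin M^n$), and for $n=1$ the statement is in fact false --- e.g. $R=k[X,Y]/(X,Y)^2$ satisfies the hypotheses (here $M^1=M$ is trivially divided and nilpotent), yet the ideal generated by the image of $X$ is not a product of prime ideals, so $R$ is not a $1$-OAF (general ZPI) ring.
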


\begin{proof}
Since $M$ is nilpotent, there exists $k\in\mathbb{N}$ such that $M^k=(0)$. If $k\leq n$, then $M^n=(0)$ which implies that every ideal of $R$ is an $n$-OA ideal, and hence $R$ is trivially an $n$-OAF-ring. Thus, we assume that $k>n$. On the other hand, if $M^n=M^{n-1}$, then clearly $M^{n-1}=M^s$ for every $s\geq n-1$ which implies that $M^{n-1}=M^n=M^k=(0)$. Again by above argument, $R$ is an $n$-OAF ring. Suppose that $M^n\subset M^{n-1}$. This implies that $M^n\subset\ M^{n-1}\subset\cdots\subset M^3\subset M^2\subset M$. Let $J$ be an ideal of $R$. If $J=(0)$, then $J=(0)=M^k$ is an $n$-OA factorization. Suppose that $J$ is not a zero ideal of $R$. If $M^n\subseteq J$, then by Lemma \ref{lmp} (2), $J$ is an $n$-OA ideal of $R$. So suppose that $M^n\nsubseteq J$. Since $M^n$ is divided, we have $J\subseteq M^n$. Choose $x\in M-M^2$ (i.e. $x\notin M^n$). Since $M^n$ is divided, we conclude that $J\subseteq M^n\subset Rx$. Then there exists a proper ideal $I_1$ of $R$ such that $J=(Rx)I_1$. As $M^n\subseteq Rx$, $Rx$ is an $n$-OA ideal of $R$. If $M^n\subseteq I_1$, then $I_1$ is an $n$-OA ideal of $R$, and hence $J=(Rx)I_1$ is an $n$-OA factorization. Thus, we may assume that $M^n\nsubseteq I_1$ which implies that $I_1\subseteq M^n$. An argument analogous to the one above shows that $I_1=(Rx)I_2$ for some proper ideal $I_2$ of $R$. Then we have $J=(Rx)^2I_2$. If $M^n\subseteq I_2$, then $J=(Rx)^2I_2$ is an $n$-OA-factorization. So assume that $I_2\subseteq M^n$. If we proceed in this manner, we conclude that $J=(Rx)^{k-n}I_{k-n}$ for some proper ideal $I_{k-n}$ of $R$. If $I_{k-n}\subseteq M^n$, then we conclude that $J=(Rx)^{k-n}I_{k-n}\subseteq M^k=(0)$ which is a contradiction. Thus, we must have $I_{k-n}\nsubseteq M^n$, that is, $M^n\subseteq I_{k-n}$. This implies that $I_{k-n}$ is an $n$-OA ideal of $R$ and $J=(Rx)^{k-n}I_{k-n}$ is an $n$-OA factorization. Consequently, $R$ is an $n$-OAF-ring.
\end{proof}

\begin{exam}\label{ex1}
Let $R=\mathbb{Z}_2[X,Y]/I$ where $I=(X^2,XY,Y^4)$ and $X,Y$ are indeterminates. Then $R$ is a zero dimensional local ring with unique prime ideal $M=(x,y)$, where $x=X+I$ and $y=Y+I$. On the other hand, note that $M$ is nilpotent with $M^4=(0)$, $M^3=(y^3)$ and $M^2=(y^2)$. Since $(x)$ and $M^2$ are not comparable, $M^2$ is not divided. Then by \cite[Theorem 4.2]{ABS1}, $R$ can not be an OAF-ring. On the other hand, easy computer check shows that any ideal of $R$ containing $M^3$ are $\{R,(y),(y^2),(y^3),(x,y),(x,y^2),(x,y^3),(x+y),(x+y^2)\}$. By Lemma \ref{lmp} (2) shows that $\{(y),(y^2),(y^3),(x,y),(x,y^2),(x,y^3),(x+y),(x+y^2)\}$ are the only 3-OA ideals of $R$. It is easy to see that $(x)$ can not be written as a product of 3-OA ideals, and thus $R$ is not an $3$-OAF-ring. Also, since $M^4=(0)$, $R$ is a 4-OAF-ring. 
\end{exam}

Now we need the following lemma, which will be used in the main theorem of this section. 
\begin{lem}\cite{idealization}\label{lemid}
Let $E$ be an $A$-module and $R=A\propto E$. The following statements are satisfied. 
\begin{enumerate}
    \item $Nil(R)=Nil(A)\propto E$. Thus $R$ is an integral domain if and only if $A$ is an integral domain and $E=0$.
    \item Let $I,J$ be two ideals of $A$. Then $(I\propto E)(J\propto E)=IJ\propto (IE+JE)$.
    \item If $\mathcal{J}$ is an ideal of $R$ contains $(0)\propto E$, then $\mathcal{J}=I\propto E$ for some ideal $I$ of $A$.
    \item $R$ is a local ring if and only if $A$ is a local ring. 
    \item  $R$ is a general ZPI-ring if and only if $A$ is a general ZPI-ring and $E$ is cylic with $ann(E)=P_1P_2\cdots P_s$ for some idempotent maximal ideals $P_1,P_2,\ldots,P_s$ or $E=0$ (if $s=0$, then $ann(E)=R$).
\end{enumerate}
\end{lem}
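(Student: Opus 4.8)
The plan is to dispatch the first four parts by direct computation in $A\propto E$, reserving genuine structure theory for part (5). For (1) I would first record the identity $(a,e)^m=(a^m,\,m\,a^{m-1}e)$, an immediate induction from the multiplication rule; it shows that $(a,e)$ is nilpotent exactly when $a\in Nil(A)$, whence $Nil(R)=Nil(A)\propto E$. The ``thus'' clause follows at once: if $E\neq 0$ then any $0\neq e\in E$ gives $(0,e)\in 0\propto E\subseteq Nil(R)$ with $(0,e)^2=(0,0)$, a nonzero zero divisor, so $R$ is not a domain; and if $E=0$ then $R\cong A$. For (2) I would expand a general element of the product ideal as a finite sum $\sum_k(a_k,e_k)(b_k,f_k)=\bigl(\sum_k a_kb_k,\ \sum_k(a_kf_k+b_ke_k)\bigr)$ with $a_k\in I,\ b_k\in J,\ e_k,f_k\in E$: the first coordinates sweep out $IJ$, while the special products $(a,0)(0,f)=(0,af)$ and $(0,e)(b,0)=(0,be)$ show the second coordinates sweep out $IE+JE$; since $(IJ)E\subseteq IE$, the set $IJ\propto(IE+JE)$ is a legitimate ideal, giving equality.

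For (3) I would take the projection $\pi\colon R\to A$, $(a,e)\mapsto a$, a surjective ring map with kernel $0\propto E$, and set $I=\pi(\mathcal J)$. The inclusion $\mathcal J\subseteq I\propto E$ is clear, and for the reverse any $(a,e)$ with $a\in I$ can be written $(a,e)=(a,e')+(0,e-e')$ with $(a,e')\in\mathcal J$ and $(0,e-e')\in 0\propto E\subseteq\mathcal J$; hence $\mathcal J=I\propto E$. Part (4) I would read off from the fact, recalled just before the lemma, that every maximal ideal of $A\propto E$ has the form $\mathfrak m\propto E$ with $\mathfrak m\in Max(A)$: this gives an inclusion-preserving bijection $Max(A)\leftrightarrow Max(R)$, so one ring is local iff the other is.

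Part (5) carries the real content, and I would invoke the structure theorem that a commutative ring is a general ZPI-ring exactly when it is a finite direct product of Dedekind domains (fields being the zero-dimensional case) and special principal ideal rings (SPIRs). In the forward direction the requirement on $A$ is automatic, since $A\cong R/(0\propto E)$ and homomorphic images of general ZPI-rings are general ZPI-rings. Writing $A=\prod_iR_i$ as a product of indecomposable factors induces $E=\bigoplus_iE_i$ and $R=\prod_i(R_i\propto E_i)$, each factor being a direct factor and hence a general ZPI-ring; the crux is to prove $E_i=0$ unless $R_i$ is a field. Fixing a factor with $E_i\neq 0$, part (1) makes its nilradical nonzero while its reduced quotient is $R_i/Nil(R_i)$, a domain; since a finite product of nonzero reduced rings is a domain only if there is a single factor, the factor is indecomposable and, being non-reduced, a genuine SPIR. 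Hence it is zero-dimensional, so $\dim R_i=0$ and $R_i$ is a field or a genuine SPIR. I would rule out the SPIR case by a Nakayama count: the maximal ideal $\mathfrak m\propto E_i$ modulo its square has residue-field dimension at least two (independent contributions from $\mathfrak m\neq 0$ and from $E_i\neq 0$), so it cannot be principal, contradicting the SPIR property. Thus $R_i$ is a field $F$, and then principality of $0\propto E_i$ forces $E_i\cong F$.

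Reassembling gives $E\cong A/ann(E)$ cyclic with $ann(E)=\prod_{i\in I}P_i$, the product of the maximal ideals attached to the field factors carrying $E$; these $P_i$ are precisely the idempotent maximal ideals of $A$, because $P=P^2$ forces $\mathfrak m^2=\mathfrak m$ in the corresponding indecomposable factor, which by Nakayama means that factor is a field. The converse just runs this backward: when $E\cong A/ann(E)$ with $ann(E)$ a product of idempotent maximal ideals, each $R_i\propto E_i$ is $R_i$ itself or the SPIR $F\propto F\cong F[x]/(x^2)$, so $R$ is again a finite product of Dedekind domains and SPIRs. I expect the SPIR computation — pinning down exactly when the maximal ideal of $R_i\propto E_i$ remains principal — to be the main obstacle, since everything else is either formal or a direct consequence of the structure theorem.
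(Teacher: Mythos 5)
Your argument is correct, but there is little in the paper to compare it against step by step: the paper's entire proof of this lemma is the single line ``Combine Theorems 3.1, 3.2, 3.3 and 4.10 of \cite{idealization}'', i.e.\ the lemma is a compilation of results quoted from Anderson and Winders, not something the authors reprove. Your computations for (1)--(4) (the identity $(a,e)^m=(a^m,\,m\,a^{m-1}e)$, the expansion of products, the projection $\pi$ with kernel $(0)\propto E$, and the bijection between $Max(A)$ and $Max(R)$) are the standard ones and are fine. For (5) you replace the citation by the classical structure theorem that a general ZPI-ring is a finite direct product of Dedekind domains and special principal ideal rings (SPIRs), decompose $R=\prod_i(R_i\propto E_i)$, show that a factor with $E_i\neq 0$ must be a single non-reduced SPIR because its reduced quotient $R_i/Nil(R_i)$ is a domain, rule out $\mathfrak{m}_i\neq 0$ by the two independent contributions to $M/M^2$, and reassemble; this is sound, and it is essentially the same machinery on which the quoted Theorem 4.10 rests, so what your proof buys is self-containedness at the cost of invoking that structure theorem explicitly (it is available in the paper's references \cite{GILM}, \cite{LARS}). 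Three small touch-ups you should make: (i) the ``contribution from $E_i\neq 0$'' to $M/M^2$ needs $E_i\neq\mathfrak{m}_iE_i$, which is easiest to justify from nilpotency of $\mathfrak{m}_i$ in an SPIR (if $E_i=\mathfrak{m}_iE_i$ then $E_i=\mathfrak{m}_i^kE_i=0$), rather than from an unstated finite-generation hypothesis; (ii) you use twice that homomorphic images (hence direct factors) of general ZPI-rings are again general ZPI-rings, which deserves its one-line argument: each prime in a factorization of $J\supseteq I$ contains $J$, hence contains $I$, so the factorization descends to $R/I$; (iii) the $P_i$ arising from $ann(E)$ are \emph{among} the idempotent maximal ideals of $A$, not ``precisely'' them, since there may be field factors of $A$ not supporting $E$ --- only that inclusion is needed, and it is true. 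None of these affects correctness.
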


\begin{proof}
Combine \cite[Theorem 3.1]{idealization}, \cite[Theorem 3.2]{idealization}, \cite[Theorem 3.3]{idealization} and \cite[Theorem 4.10]{idealization}
\end{proof}

\begin{thm}\label{idealization}
Let $E$ be an $A$-module, $R=A\propto E$ and $n\in\mathbb{N}$. Assume that $A$ is a non-local ring. Then $R$ is an $n$-OAF-ring if and only if $A$ is a general ZPI-ring and $E$ is cyclic with $ann(E)=P_1P_2\cdots P_s$ for some idempotent maximal ideals $P_1,P_2,\ldots,P_m$ or $E=0$ (if $s=0$, then $ann(E)=R$).
\end{thm}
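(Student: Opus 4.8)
The key observation is that since $A$ is non-local, the trivial extension $R = A \propto E$ is also non-local by Lemma \ref{lemid}(4). By Lemma \ref{rm}(1), for a non-local ring being an $n$-OAF-ring is \emph{equivalent} to being a general ZPI-ring. Therefore the entire statement collapses to the assertion that $R = A \propto E$ is a general ZPI-ring if and only if $A$ is a general ZPI-ring and $E$ is cyclic with $ann(E) = P_1 P_2 \cdots P_s$ for idempotent maximal ideals $P_i$ (or $E = 0$). But this is precisely Lemma \ref{lemid}(5). So the plan is essentially to chain these two equivalences.

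In detail, first I would record that $A$ non-local implies $R$ non-local via Lemma \ref{lemid}(4). Then I would invoke Lemma \ref{rm}(1): applied to the ring $R$, which is non-local, it yields that $R$ is an $n$-OAF-ring if and only if $R$ is a general ZPI-ring. Finally I would apply Lemma \ref{lemid}(5), which characterizes exactly when the trivial extension $R = A \propto E$ is a general ZPI-ring, namely in terms of the stated condition on $A$ and $E$. Combining these two equivalences gives the result directly, with no further computation required.

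The proof is therefore a short two-step reduction rather than a genuinely hard argument, since the substantive content has been isolated into the two cited lemmas. The only point requiring minor care is the consistency of the notation in the statement (the indices $P_1,\ldots,P_s$ versus $P_1,\ldots,P_m$, and the placement of the clause ``if $s=0$, then $ann(E)=R$''), which should be matched verbatim to the hypothesis of Lemma \ref{lemid}(5) so that the ``if and only if'' transfers cleanly. I do not anticipate any real obstacle here; the genuine work lives in the earlier lemmas, and this theorem simply packages the non-local case by identifying $n$-OAF with general ZPI through Lemma \ref{rm}(1).
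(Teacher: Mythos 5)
Your proposal is correct and matches the paper's own proof essentially verbatim: the paper also reduces via Lemma \ref{lemid}(4) to the non-local case, applies Lemma \ref{rm} to identify $n$-OAF with general ZPI for $R$, and concludes by Lemma \ref{lemid}(5). No gaps; this is exactly the intended two-step reduction.
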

\begin{proof}
Let $A$ be a non-local ring. Then by Lemma \ref{lemid} (4), $R$ is a non-local ring. Also, by Lemma \ref{rm}, $R$ is an $n$-OAF-ring if and only if $R$ is a general ZPI ring. The rest follows from Lemma \ref{lemid} (5).
\end{proof}

\begin{thm}\label{generalid}
Let $E$ be a module over a local ring $A$ with unique maximal ideal $M$, $R=A\propto E$ and $n\in\mathbb{N}$. The following statements are satisfied.
\begin{enumerate}
    \item $R$ is a 1-dimensional $n$-OAF ring if and only if $A$ is a 1-dimensional local $n$-OAF-domain and $E=0$.
    \item  If $R$ is a zero dimensional $n$-OAF ring, then $A$ is a zero dimensional $n$-OAF ring and either $E=0$ and $M$ is a nilpotent ideal or $M^n=(0)$. In the second case, every ideal of $A$ is an $n$-OA ideal. The converse also holds if $M^{n-1}E\subseteq Mx$ for every $x\notin M^{n-1}E$ and $M^{n-1}E\subseteq aE$ for every $0\neq a\in M$.
\end{enumerate}
\end{thm}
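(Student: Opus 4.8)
The plan is to reduce each equivalence to statements about $A$ and the maximal ideal $\fm=M\propto E$ of $R$, using the three facts provided for the idealization: $\dim(A\propto E)=\dim(A)$, the equivalence ``$R$ local $\iff$ $A$ local'' (Lemma \ref{lemid}(4)), and $\fm^{k}=M^{k}\propto M^{k-1}E$ for all $k\ge 1$ (Lemma \ref{lemid}(2), by induction), together with the description of $n$-OA ideals of a local ring in Lemma \ref{lmp}(2). For part (1), I would first note that a $1$-dimensional $n$-OAF ring is a domain by Theorem \ref{thm12}; then Lemma \ref{lemid}(1) forces $E=0$ and $A$ to be a domain, so $R\cong A$ is a $1$-dimensional local $n$-OAF-domain. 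The reverse implication is immediate, since $R=A\propto 0\cong A$.

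For the forward direction of part (2), assume $R$ is a zero-dimensional $n$-OAF ring. Then $\dim A=\dim R=0$, and since $A\cong R/\big((0)\propto E\big)$, Proposition \ref{prp5} shows $A$ is an $n$-OAF ring. The decisive first step is that $\fm$ is nilpotent: writing the proper ideal $(0)$ as $(0)=\prod_{i=1}^m I_i$ with each $I_i$ a proper $n$-OA ideal of the local ring $R$, Lemma \ref{lmp}(2) gives $\fm^{n}\subseteq I_i$ for every $i$, whence $(0)=\prod_{i=1}^m I_i\supseteq \fm^{nm}$, so $\fm^{nm}=0$ and $M$ is nilpotent. If $E=0$ we are in the first alternative. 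If $E\neq 0$, I would observe that $E\neq ME$, since $E=ME$ would iterate to $E=M^{k}E=0$ once $M^{k}=0$; hence I may choose $e_0\in E\setminus ME$. Then $z=(0,e_0)\in\fm\setminus\fm^2$, and any $n$-OA-factorization of the proper ideal $Rz$ must consist of a single factor (a product of two or more factors would lie in $\fm^2$), so $Rz=(0)\propto Ae_0$ is itself an $n$-OA ideal. By Lemma \ref{lmp}(2) it either equals $\fm=M\propto E$, forcing $M=0$, or it contains $\fm^{n}=M^{n}\propto M^{n-1}E$; comparing first coordinates in the latter case yields $M^{n}=0$. Either way $M^{n}=(0)$, and then every product of $n$ nonunits of $A$ lies in $M^{n}=(0)\subseteq J$, so every proper ideal $J$ of $A$ is an $n$-OA ideal.

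For the converse of part (2), if $E=0$ then $R\cong A$ and there is nothing to prove, so assume $M^{n}=(0)$; here $\fm^{n}=(0)\propto M^{n-1}E$ and $\fm^{n+1}=(0)$, so $\fm$ is nilpotent. I would then verify that $\fm^{n}$ is a divided ideal of $R$ by checking, for each generator $(a,e)$, that either $(a,e)\in\fm^{n}$ or $\fm^{n}\subseteq R(a,e)$. When $a$ is a unit this is clear; when $0\neq a\in M$ the hypothesis $M^{n-1}E\subseteq aE$ lets one write each $g\in M^{n-1}E$ as $g=af$ and hence $(0,g)=(0,f)(a,e)\in R(a,e)$; when $a=0$ and $e\in M^{n-1}E$ we have $(0,e)\in\fm^{n}$ directly; and when $a=0$, $e\notin M^{n-1}E$ the hypothesis $M^{n-1}E\subseteq Me$ gives $\fm^{n}=(0)\propto M^{n-1}E\subseteq (0)\propto Ae=R(0,e)$. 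With $\fm^{n}$ divided and $\fm$ nilpotent, Proposition \ref{pdiv1} applies to $R$ and yields that $R$ is an $n$-OAF ring; moreover $\dim R=\dim A=0$.

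The dimension and quotient bookkeeping and part (1) are routine. The real work is in part (2). In the forward direction the key point is the choice $z=(0,e_0)$ with $e_0\notin ME$, combined with nilpotency of $M$ to exclude $E=ME$, which is exactly what collapses the two alternatives to $M^{n}=(0)$. In the converse, the delicate step is recognizing that the two technical hypotheses $M^{n-1}E\subseteq Mx$ for $x\notin M^{n-1}E$ and $M^{n-1}E\subseteq aE$ for $0\neq a\in M$ are precisely the conditions that make $\fm^{n}=(0)\propto M^{n-1}E$ comparable to every principal ideal of $R$, so that the divisibility hypothesis of Proposition \ref{pdiv1} is met.
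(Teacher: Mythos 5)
Your proof is correct, and it diverges from the paper in an interesting way in the forward direction of part (2), while part (1) and the converse of part (2) essentially coincide with the paper's argument (Theorem \ref{thm12} plus Lemma \ref{lemid}(1) for (1); showing $(M\propto E)^n=(0)\propto M^{n-1}E$ is divided and invoking Proposition \ref{pdiv1} for the converse). For the forward direction the paper factorizes the homogeneous ideal $(0)\propto E=H_1\cdots H_t$, uses Lemma \ref{lemid}(3) to write each $H_i=J_i\propto E$, and splits on $t$: for $t>1$ the product formula of Lemma \ref{lemid}(2) forces $E=ME$ together with $M^{nt}=(0)$, hence $E=0$; for $t=1$ the ideal $(0)\propto E$ is itself $n$-OA and comparison with $(M\propto E)^n$ gives $M^n=(0)$. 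You instead factorize the zero ideal of $R$ to obtain nilpotency of $M$ as a uniform first step (every $n$-OA ideal of the zero-dimensional local ring $R$ contains $(M\propto E)^n$), then, when $E\neq 0$, use that nilpotency to rule out $E=ME$, choose $e_0\in E\setminus ME$, and note that $R(0,e_0)\not\subseteq (M\propto E)^2$ forces any $n$-OA factorization of $R(0,e_0)=(0)\propto Ae_0$ to consist of a single factor; Lemma \ref{lmp}(2) applied to this principal ideal then yields $M^n=(0)$ by comparing first coordinates. In effect you trade the homogeneous-ideal lemma (Lemma \ref{lemid}(3)) for the ``element outside the square of the maximal ideal'' trick that the paper itself uses in Proposition \ref{prps} and Theorem \ref{thm12}; the paper's route extracts both alternatives ($E=0$ versus $M^n=(0)$) in one stroke from a single factorization, whereas yours makes the role of nilpotency explicit and only ever factors principal ideals beyond the zero ideal. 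Two further minor points in your favor: deducing that $A$ is $n$-OAF directly from $A\cong R/\bigl((0)\propto E\bigr)$ and Proposition \ref{prp5} is cleaner than the paper's case-by-case verification, and in the converse you correctly avoid the paper's separate treatment of $M^{n-1}E=(0)$, since the zero ideal is trivially divided and Proposition \ref{pdiv1} covers that case as well.
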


\begin{proof}
$(1)$ Let $A$ be a local ring with unique maximal ideal $M$. Then by Lemma \ref{lemid} (4), $R$ is a local ring with unique maximal ideal $M\propto E$. Assume that $R$ is a 1-dimensional $n$-OAF ring. Then by Theorem \ref{thm12}, $R$ is a domain. In this case, by Lemma \ref{lemid} (1), $A$ is a 1-dimensional domain and $E=0$. The converse is easy since $A\cong A\propto (0)=R$.\\
$(2)$ Assume that $R$ is a zero dimensional $n$-OAF ring. In this case, $M$ and $M\propto E$ are the unique prime ideals of $A$ and $R=A\propto E$, respectively. Since $R$ is an $n$-OAF ring, we can write $(0)\propto E=H_1H_2\cdots H_t$ for some $n$-OAF-ideals $H_1,H_2,\ldots,H_t$ of $R$. Since $(0)\propto E\subseteq H_i$, by Lemma \ref{lemid} (3), $H_i=J_i\propto E$ for some proper ideal $J_i$ of $A$. Then by Lemma \ref{lmp} (2), we conclude that either $J_i=M$ or $M^n\subseteq J_i\subseteq M$. On the other hand, by Lemma \ref{lemid} (2), we conclude that $J_1J_2\cdots J_t=(0)$ and $(J_1J_2\cdots J_{t-1}+J_1J_2\cdots J_{t-2}J_t+\cdots+J_2J_3\cdots J_t)E=E$. Assume that $t>1$. The second equality gives $E\subseteq M^{t-1}E\subseteq E$, that is, $E=ME$. On the other hand, first equality gives $M^{nt}=0$, that is $M$ is a nilpotent ideal. This implies that $E=M^{nt}E=0$. In this case, $R=A\propto (0)\cong A$ is an $n$-OAF-ring. Now, assume that $t=1$, that is, $(0)\propto E=J_1$ is an $n$-OA ideal. This gives $(M\propto E)^n=M^n\propto M^{n-1}E\subset (0)\propto E\subset M\propto E$. Which implies that $M^n=(0)$ and $M^{n-1}E\subsetneq E$. In this case, every ideal of $A$ is an $n$-OA ideal.\\
For the converse, assume that $A$ is an $n$-OAF ring with $E=0$. In this case, $A\cong A\propto (0)=R$ is an $n$-OAF-ring. Now, assume that $E\neq 0$, $M^n=0$, $M^{n-1}E\subseteq Mx$ for every $x\notin M^{n-1}E$ and $M^{n-1}E\subseteq aE$ for every nonzero $a\in M$. In this case, $R$ is a local ring with unique maximal ideal $M\propto E$. If $M^{n-1}E=0$, then by Lemma \ref{lemid} (2), we have $(M\propto E)^n=M^n\propto M^{n-1}E=(0)\propto (0)$. Then by \cite[Theorem 2]{B}, every ideal of $R$ is an $n$-OA ideal and hence $R$ is an $n$-OAF ring. Now, assume that $M^{n-1}E\neq (0)$. Then $(M\propto E)^{n+1}=M^{n+1}\propto M^n E=(0)\propto (0)$, that is, $M\propto E$ is a nilpotent ideal. Now, we will show that $(M\propto E)^n$ is a divided ideal of $R$. Let $(a,m)\notin (M\propto E)^n=(0)\propto M^{n-1}E$. This gives either $a\neq 0$ or $a=0$ and $m\notin M^{n-1}E$. Now, we have two cases. \textbf{Case 1:} Suppose that $a\neq 0$. Then we may assume that $a\in M$. Otherwise, $(a,m)$ is a unit in $R$ which implies that $(M\propto E)^{n}\subseteq R(a,m)=R$. Thus, $0\neq a\in M$. By the assumption, we have $M^{n-1}E\subseteq aE$. Let $(0,y)\in (M\propto E)^n=(0)\propto M^{n-1}E$. This gives $y\in M^{n-1}E\subseteq aE$, which implies that $y=ae$ for some $e\in E$. Then we can write $(0,y)=(0,ae)=(a,m)(0,e)\in (a,m)R$, that is $(M\propto E)^n\subseteq R(a,m)$. \textbf{Case 2:} Let $a=0$ and $m\notin M^{n-1}E$. Then by the assumption, we have $M^{n-1}E\subseteq Mm$. Let $(0,y)\in (M\propto E)^n$. Then we have $y\in M^{n-1}E\subseteq Mm$ which implies that $y=bm$ for some $b\in M$. This gives $(0,y)=(0,bm)=(a,m)(b,0)\in (a,m)R$, that is $(M\propto E)^n\subseteq R(a,m)$. Thus, $(M\propto E)^n$ is a divided ideal of $R$. Then by Proposition \ref{pdiv1}, $R$ is an $n$-OAF-ring.
\end{proof}

\begin{cor}\label{cid}
Let $E$ be a nonzero module over an integral domain $A$, $R=A\propto E$ and $n\in\mathbb{N}$. The following statements are equivalent. 
\begin{enumerate}
    \item $R$ is an $n$-OAF-ring.
    \item $A$ is a field.
    \item Every proper ideal of $R$ is an $n$-OA ideal.
\end{enumerate}
\end{cor}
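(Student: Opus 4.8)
The plan is to prove the implications $(1)\Rightarrow(2)\Rightarrow(3)\Rightarrow(1)$, the whole argument resting on the fact that $R=A\propto E$ cannot be an integral domain once $E\neq 0$. Indeed, since $A$ is a domain and $E\neq 0$, Lemma \ref{lemid}(1) gives $Nil(R)=Nil(A)\propto E=(0)\propto E\neq(0)$, so $R$ is never reduced, let alone a domain. This single observation drives the first implication.

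For $(1)\Rightarrow(2)$, suppose $R$ is an $n$-OAF ring. Then in particular every proper $2$-generated ideal of $R$ has an $n$-OA-factorization, so Theorem \ref{thm12} applies and yields $\dim R\leq 1$; moreover that theorem forces $R$ to be a domain whenever $\dim R=1$. Since we have just observed that $R$ is not a domain, the case $\dim R=1$ is impossible, hence $\dim R=0$. Because $\dim R=\dim(A\propto E)=\dim A$, the integral domain $A$ has Krull dimension $0$ and is therefore a field. This is the step carrying the real content; everything else is formal.

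For $(2)\Rightarrow(3)$, assume $A$ is a field, so its maximal ideal is $(0)$ and, by Lemma \ref{lemid}(4), $R$ is local with maximal ideal $\mathcal{M}=(0)\propto E$. Applying Lemma \ref{lemid}(2) with $I=J=(0)$ gives $\mathcal{M}^2=\bigl((0)\propto E\bigr)^2=(0)(0)\propto\bigl((0)E+(0)E\bigr)=(0)$, whence $\mathcal{M}^n=(0)$ for every $n\geq 2$. Now let $I$ be any proper ideal of $R$ and suppose $a_1a_2\cdots a_{n+1}\in I$ for nonunits $a_1,\ldots,a_{n+1}$; since $R$ is local these all lie in $\mathcal{M}$, so $a_1\cdots a_n\in\mathcal{M}^n=(0)\subseteq I$. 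Thus $I$ is an $n$-OA ideal, proving (3). This is exactly the reasoning used in the proof of Theorem \ref{generalid}(2).

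The implication $(3)\Rightarrow(1)$ is immediate: if every proper ideal of $R$ is an $n$-OA ideal, then each proper ideal $I$ is already a (length-one) $n$-OA-factorization of itself, so $R$ is an $n$-OAF ring. I expect no genuine obstacle beyond $(1)\Rightarrow(2)$; the one point to keep in mind is that both Theorem \ref{thm12} and the vanishing $\mathcal{M}^n=(0)$ require $n\geq 2$, which is where the hypothesis on $n$ is really being used.
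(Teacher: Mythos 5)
Your proof is correct, and its heart -- the implication $(1)\Rightarrow(2)$ -- takes a genuinely different route from the paper. The paper splits into cases according to whether $R$ is local: in the non-local case it invokes Lemma \ref{rm} and Theorem \ref{idealization} to make $A$ a general ZPI-domain (hence Dedekind) whose $ann(E)$ is a product of idempotent maximal ideals, and then uses the fact that a Dedekind domain has no nonzero idempotent maximal ideal; in the local case it invokes Theorem \ref{generalid} to conclude that the maximal ideal $M$ of $A$ is nilpotent, hence zero. You instead make the single observation that $Nil(R)\supseteq(0)\propto E\neq(0)$, so $R$ is never a domain, and then apply Theorem \ref{thm12}: since $\dim R=1$ would force $R$ to be a domain, you get $\dim R=0$, hence $\dim A=\dim(A\propto E)=0$ and the domain $A$ is a field. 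This is shorter, avoids the case split and the structural machinery of Section 4 entirely, and isolates the real mechanism (a nonzero idealization cannot be a domain). The paper's route, by contrast, yields extra structural information along the way (e.g.\ cyclicity of $E$ in the non-local case). Your $(2)\Rightarrow(3)$ is essentially the paper's argument, done by hand instead of citing Lemma \ref{lmp}(2), and $(3)\Rightarrow(1)$ is trivial in both. One remark in your favor: your closing caveat that $n\geq 2$ is genuinely needed is apt and applies equally to the paper's own proof -- the equality $[(0)\propto E]^n=(0)\propto(0)$ used there fails for $n=1$, and indeed for $n=1$ the equivalence $(2)\Leftrightarrow(3)$ is false when $E\neq 0$ (the zero ideal of $R$ is not prime), so the corollary should really be stated for $n\geq 2$ even though the paper writes $n\in\mathbb{N}$.
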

\begin{proof}
$(1)\Rightarrow(2)$ Let $R$ be an $n$-OAF-ring and $E$ be a nonzero module over an integral domain $A$. Now, we have two cases. \textbf{Case 1:} Suppose that $R$ is not a local ring. Then by Lemma \ref{rm}, $R$ is a general ZPI-ring. Also by Theorem \ref{idealization}, $A$ is a general ZPI-ring and $ann(E)=P_1P_2\cdots P_m$ for some idempotent maximal ideals $P_i$ of $A$ (since $E\neq 0$). Thus, $A$ is a Dedekind domain and the only idempotent maximal ideal of $A$ is the zero ideal. Thus, $A$ is a field. \textbf{Case 2:} Suppose that $R$ is a local ring. In this case, $A$ is a local ring with unique maximal ideal $M$, and also, $M\propto E$ is the unique maximal ideal of $R$. As $E\neq 0$, by Theorem \ref{generalid}, $M$ is a nilpotent ideal. Since $M$ is a maximal ideal, we have $M=0$, that is $A$ is a field. \\
$(2)\Rightarrow (3)$ Since $A$ a field, $A$ is a local ring with the unique maximal ideal $M=(0)$. Then $R$ is a local ring with the unique maximal ideal $(0)\propto E$. Also, note that $[(0)\propto E]^n=(0)\propto (0)$, then by Lemma \ref{lmp} (2), every ideal of $R$ is an $n$-OA ideal.\\
$(3)\Rightarrow (1)$ It is straightforward.
\end{proof}

\begin{cor}\label{cid2}
Let $E$ be a nonzero module over a local ring $A$ with unique maximal $M$ such that $ME=0$, $R=A\propto E$ and $n\in\mathbb{N}$. The following are equivalent. 
\begin{enumerate}
    \item $R$ is an $n$-OAF-ring. 
    \item $M^n=0$.
    \item Every ideal of $R$ is an $n$-OA ideal.
\end{enumerate}
\end{cor}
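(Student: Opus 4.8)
The plan is to establish the cycle $(2)\Rightarrow(3)\Rightarrow(1)\Rightarrow(2)$, the engine being an explicit computation of the powers of the maximal ideal of $R$. Since $A$ is local, Lemma \ref{lemid}(4) makes $R=A\propto E$ local, and its unique maximal ideal is $\mathcal{M}=M\propto E$. First I would use Lemma \ref{lemid}(2) to get $\mathcal{M}^{k}=M^{k}\propto M^{k-1}E$ for every $k\geq 1$; because $ME=0$, the second coordinate vanishes once $k\geq 2$, so $\mathcal{M}^{n}=M^{n}\propto (0)$ and hence $\mathcal{M}^{n}=(0)$ exactly when $M^{n}=(0)$. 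This collapse is the whole reason for the hypothesis $ME=0$: it reduces every question about $\mathcal{M}^{n}$ to the corresponding question about $M^{n}$ in $A$.

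For $(2)\Rightarrow(3)$ I would invoke Lemma \ref{lmp}(2): in the local ring $R$ a proper ideal is an $n$-OA ideal precisely when it is prime or contains $\mathcal{M}^{n}$. If $M^{n}=(0)$, then $\mathcal{M}^{n}=(0)$ lies in every ideal, so every proper ideal of $R$ is an $n$-OA ideal, giving $(3)$. The implication $(3)\Rightarrow(1)$ is immediate, since then each proper ideal is its own length-one $n$-OA factorization, so $R$ is an $n$-OAF ring.

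For $(1)\Rightarrow(2)$ I would first fix the dimension. As $E\neq 0$, Lemma \ref{lemid}(1) shows $R$ is not a domain; since $R$ is $n$-OAF, Theorem \ref{thm12} gives $\dim R\leq 1$ with equality only for domains, so $\dim R=0$ and therefore $A$ is zero-dimensional. Now $R$ is a zero-dimensional $n$-OAF ring, so Theorem \ref{generalid}(2) applies and returns either $E=0$ (with $M$ nilpotent) or $M^{n}=(0)$; as $E\neq 0$, we get $M^{n}=(0)$, which is $(2)$. Equivalently, one may note that under $ME=0$ the two side conditions $M^{n-1}E\subseteq Mx$ and $M^{n-1}E\subseteq aE$ required for the converse half of Theorem \ref{generalid}(2) hold vacuously (both left-hand sides are $(0)$), so the present statement is essentially that theorem specialised to $ME=0$.

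The step I expect to be the main obstacle is the dimension reduction inside $(1)\Rightarrow(2)$: one must exclude the possibility that $R$ is a one-dimensional $n$-OAF ring before the zero-dimensional classification Theorem \ref{generalid}(2) becomes available. This is exactly where $E\neq 0$ is used decisively, via Lemma \ref{lemid}(1) together with Theorem \ref{thm12}. By contrast, the power computation $\mathcal{M}^{n}=M^{n}\propto(0)$ is routine, yet it is the conceptual heart of the argument, since it is what transfers the nilpotency condition $M^{n}=(0)$ between $A$ and $R$; everything else is bookkeeping with Lemma \ref{lmp}(2).
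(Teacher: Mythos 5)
Your proof is correct and takes essentially the same route as the paper's: the same cycle of implications, the same key computation $(M\propto E)^n=M^n\propto M^{n-1}E=(0)\propto(0)$ under $ME=0$ combined with Lemma \ref{lmp}(2) for $(2)\Rightarrow(3)$, and the same appeal to Theorem \ref{generalid} for $(1)\Rightarrow(2)$, where you usefully make explicit the zero-dimensionality reduction (via Lemma \ref{lemid}(1) and Theorem \ref{thm12}) that the paper leaves implicit. The only caveat, shared with the paper's own proof, is that the identity $\mathcal{M}^n=M^n\propto(0)$ needs $n\geq 2$, since for $n=1$ the second coordinate is $M^0E=E$.
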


\begin{proof}
$(1)\Rightarrow (2)$ Assume that $E$ is a nonzero module over a local ring $A$ with unique maximal ideal $M$. Then by Lemma \ref{lemid} (4), $R$ is a local ring with unique maximal ideal $M\propto E$. Since $R$ is an $n$-OAF-ring, by Theorem \ref{generalid}, we have $M^n=0$.\\
$(2)\Rightarrow(3)$ Assume that $M^n=0$. Since $ME=0$, we conclude that $(M\propto E)^n=M^n\propto M^{n-1}E=(0)\propto (0)$. Then by Lemma \ref{lmp} (2), every ideal of $R$ is an $n$-OA ideal.\\
$(3)\Rightarrow (1)$ It is straightforward.
\end{proof} 

To illustrate the application of Theorem \ref{generalid}, we present an example of an $n$-OAF ring that contains non $n$-OA ideals.

\begin{exam}
Let $k$ be a field and $A=k[X]/I$, where $I=(X^n)$ and $n\geq 2$. Then $A$ is a zero dimensional local ring with the unique maximal ideal $M=(x)$, where $x=X+I$. Let $R=A\propto A$. Then $R$ is a zero dimensional local ring with the unique maximal ideal $M\propto A$. Since $[M\propto A]^n=(0)\propto (x^{n-1})A\neq 0$, $R$ has non $n$-OA ideals by \cite[Theorem 2.22]{idealization}. It is easy to see that $A$ is a chain ring whose all ideals are $M^n=(0)\subset M^{n-1}=(x^{n-1})\subset (x^{n-2})\subset\cdots\subset M=(x)\subset A$. Let $E=A$ and $0\neq a\in M$. Then note that $M^{n-1}E=(x^{n-1})\subseteq aE$ since $(x^{n-1})$ is the minimal ideal of $A$. Let $y\notin M^{n-1}E=(x^{n-1})$. Since $A$ is a chain ring, $y=x^kb$ for some unit $b\in A$ and $k<n-1$. This gives $M^{n-1}E=(x^{n-1})\subseteq My=(x)(x^k)=(x^{k+1})$. Then by Theorem \ref{generalid} (2), $R$ is an $n$-OAF ring. 
\end{exam}

\end{document}